\journal{European Journal of Operational Research}
\newtheorem{proposition}{Proposition}
\newtheorem{remark}{Remark}
\newtheorem{definition}{Definition}
\newtheorem{lemma}{Lemma}
\newtheorem{theorem}{Theorem}
\newtheorem{corollary}{Corollary}
\newtheorem{example}{Example}
\newcommand{\black}{\color{black}}
\definecolor{OliveGreen}{rgb}{0,0.6,0}
\begin{document}

%Para las gráficas, los puntos van a grosor 2, el eje x a grosor 14 y los ejes a grosor 12.

\begin{frontmatter}

%% Title, authors and addresses

%% use the tnoteref command within \title for footnotes;
%% use the tnotetext command for theassociated footnote;
%% use the fnref command within \author or \address for footnotes;
%% use the fntext command for theassociated footnote;
%% use the corref command within \author for corresponding author footnotes;
%% use the cortext command for theassociated footnote;
%% use the ead command for the email address,
%% and the form \ead[url] for the home page:
%% \title{Title\tnoteref{label1}}
%% \tnotetext[label1]{}
%% \author{Name\corref{cor1}\fnref{label2}}
%% \ead{email address}
%% \ead[url]{home page}
%% \fntext[label2]{}
%% \cortext[cor1]{}
%% \address{Address\fnref{label3}}
%% \fntext[label3]{}

\title{Stochastic comparisons of imperfect maintenance models for a gamma deteriorating system}

%% use optional labels to link authors explicitly to addresses:
%% \author[label1,label2]{}
%% \address[label1]{}
%% \address[label2]{}

\author[Mercier]{Sophie Mercier}
\address[Mercier]{CNRS/Univ Pau \& Pays Adour, IPRA-LMAP, 64000 PAU, France\\ Corresponding author, sophie.mercier@univ-pau.fr}

\author[Castro]{I.T. Castro}
\address[Castro]{Department of Mathematics, University of Extremadura, Spain -  inmatorres@unex.es}

\begin{abstract}
This paper compares two imperfect repair models for a degrading system, with deterioration level modeled by a non homogeneous gamma process. Both models consider instantaneous and periodic repairs. The first model assumes that a  repair reduces the degradation of the system accumulated from the last maintenance action. The second model considers a virtual age model and assumes that a repair reduces the age accumulated by the system since the last maintenance action. Stochastic comparison results between the two resulting processes are obtained. Furthermore, a specific case is analyzed, where the two repair models provide identical expected deterioration levels at maintenance times. Finally, two optimal maintenance strategies are explored, considering the two models of repair. 
\end{abstract}

\begin{keyword}
Reliability, non homogeneous gamma process, (increasing) convex order, virtual age model.

\end{keyword}

\end{frontmatter}

\section{Introduction}

{\black Safety and dependability are crucial issues in many industries, which have  lead to the development of a huge literature devoted to the so-called reliability theory. In the oldest literature, the lifetimes of industrial systems or components were usually directly modeled through random variables, see, e.g., \cite{BarlowProschan1965} for a pioneer work on the subject. In case of repairable systems, successive lifetimes of a system then appear as the points of a counting process leading to so-called recurrent events. Based on the development of on-line monitoring which allows the effective measurement of a system deterioration (length of a crack, thickness of a cable, intensity of vibrations, temperature, ...), numerous papers nowadays model the degradation in itself, which is often considered to be (mostly) monotonous with respect to the time. This is done through the use of stochastic processes such as Wiener processes with trend (\cite{LiuWuXieKuo2017,ZhangGaudoinXie2015,HuLeeTang2015}), inverse gaussian models (\cite{ChenYeXiangZhang2015}), transformed Beta degradation processes (\cite{GiorgioPulcini2018}) or gamma processes (\cite{HuynhCastroBarrosBerenguer2014}), among others (see also \cite{MercierPham2012} in case of a bivariate deterioration indicator). This paper focuses on gamma processes, which seem the most popular, see \cite{Noortwijk2009} with a large number of references therein}.

To mitigate the effect of the system degradation and to extend the system lifetime, a large volume of maintenance models have been proposed in the literature. Most of these models are limited to perfect repairs (\cite{Caballe2015,HongZhouZhangYe2014}). However, imperfect maintenance actions describe more realistic situations than perfect repairs. Some advances have been made to include imperfect
repairs in a degrading system (\cite{Alaswad2017,GiorgioPulcini2018}). However, as \cite{ZhangGaudoinXie2015} claimed, the issue of treating imperfect maintenance in the context of degrading systems remains widely open nowadays.
%The
%first type assumes that the maintenance actions return the system to a
%previous stage of deterioration. In the second type, the imperfect
%maintenance reduces the degradation level of the maintained system (\cite{Castro2016}). The third approach is based on the idea
%that the maintenance action changes the rate of degradation of the system (%
%\cite{Zhang2015}). 

Stochastic orders and related inequalities play an important role in
reliability theory and maintenance policies, as they allow to, e.g., obtain
bounds for system reliability or availability, or to compare different
maintenance strategies (\cite{barlow1964}, \cite{Ohnishi2002}). There is a huge reliability literature on the use of
stochastic orders which compare locations of the lifetime, residual lifetime
or inactivity time of the systems (\cite{khaledi2007}). However, there exist
other types of stochastic orders which measure variability and spread.
Though their use has become classical in insurance literature (\cite
{denuit1997} and \cite{denuit1999} {\black amongst others}), they are
not so common in the reliability literature apart from a few exceptions (\cite
{kochar2009}, \cite{fang2013}).

Following the spirit showed in \cite{MercierCastro2013} and \cite{Castro2016} {\black (see also \cite{GiorgioPulcini2018})}, two
models of imperfect repair are analyzed in this paper {\black for a gamma deteriorating system}. The first model, called Arithmetic Reduction of {\black of Deterioration} of order 1 (ARD1), assumes that the repair removes the $\rho_1\%$ of the degradation accumulated by the system from the last maintenance action. The second model {\black is based on the notion of virtual age as introduced by \cite{Kijima1989} in the context of recurrent events (where only lifetime data are available). The idea is that an imperfect repair rejuvenates the system, namely puts it back to a similar state as it was before the repair (details further). Following \cite{doyen2004} in the context of recurrent events, an Arithmetic Reduction of Age of order 1 (ARA1) is here considered,} which assumes that 
the repair removes the $\rho_2\%$ of the (virtual) age accumulated by the system since the last maintenance action. An ARD1 repair hence lowers the deterioration level, without rejuvenating the system. On the contrary, by an ARA1 repair, the system is put back to the exact situation where it was some time before, which entails the lowering of both its deterioration level and (virtual) age. The two models may hence correspond to different maintenance actions in an {\color{black}application context.  As an example, \cite{Sadeghi2018} show that the tamping or cleaning of the ballast of a railway track may have different consequences: The tamping mainly improves the track geometry conditions (short term impact) but has mostly no impact on the ballast mechanical conditions (long term impact), whereas the cleaning also improves the latter. Then, one could think that the tamping corresponds to some reduction of the track deterioration level (such as an ARD1 repair) whereas the cleaning also acts on its potential of future degradation, which could be represented by some reduction of age model (such as an ARA1 repair).} 
 
{\color{black} The choice between the two models may however not always be so clear in an  applied context. For a better understanding of their differences,} this paper focuses on their comparison, from a probabilistic point of view. Assuming that the degradation of the system is modeled by a non homogeneous gamma process, stochastic comparisons of both location and spread of the two resulting processes are given. Moreover, a specific case is analyzed, where the two models provide identical expected deterioration levels at repair times (``equivalent'' case). 
 
 Going back to the general setup, two maintenance strategies are next proposed. Both strategies consider periodic imperfect repairs (period $T$) based on either one of the two models (ARD1 or ARA1). In the first strategy ($(n,T)$ policy), the system is replaced at the time of the $n$-th repair. The second strategy ($(M,T)$ policy) considers a control limit rule, with replacement when the degradation level exceeds a preventive threshold $M$. For both maintenance strategies, the objective  
function is the expected profit rate, which takes into account some reward produced by the system, with unitary reward (or cost) per unit time depending on the degradation level of the system. (The lower the deterioration level, the higher the unitary reward per unit time). This reward function is based on
classical utility functions used in insurance literature (\cite{rolski1998}). The
use of this reward function represents an advance in the reliability
literature where the cost objective function is usually developed considering that the system state is binary (up or down), with some fixed unavailability cost per unit time, independent on the deterioration level. Theoretical results are obtained for the comparison of the objective functions of the $(n,T)$ policy under the two types of imperfect repairs (ARD1 or ARA1). 

The paper is organized as follows: Section 2 provides some technical reminders. The two imperfect repair models are described in Section 3. The corresponding moments are compared in Section 4 whereas Section 5 is devoted to stochastic comparison results. The ``equivalent'' case is studied in Section 6.  Section 7 deals with the reward function and the two maintenance strategies. Concluding remarks are provided in Section 8, together with possible extensions.

\section{Technical reminders}

The definition of two stochastic orders is first recalled, which allows to
compare the location of random variables.

\begin{definition}
\label{Def Sto Order} Let $X$ and $Y$ be two non negative random variables
with probability density functions (p.d.f.) $f_{X}$ and $f_{Y}$ with respect
to the Lebesgue measure, cumulative distribution functions (c.d.f.) $F_{X}$ and $%
F_{Y}$ and survival functions $\bar{F}_{X}$ and $\bar{F}_{Y}$, respectively.
Then:

\begin{enumerate}
\item $X$ is said to be smaller than $Y$ in the usual stochastic order ($%
X\prec_{sto}Y$) if $\bar{F}_{X}\leq\bar{F}_{Y}$ (or $F_{X}\geq F_{Y}$,
equivalently).

\item $X$ is said to be smaller than $Y$ in the likelihood ratio order ($%
X\prec_{lr}Y$) if $\frac{f_{Y}}{f_{X}}$ is non-decreasing on the union of
the supports of $X$ and $Y$.
\end{enumerate}
\end{definition}

We recall that the likelihood ratio order implies the usual stochastic
order. The definition of other stochastic orders is next provided, which
allows to compare the variability of two random variables.

\begin{definition}
\label{Def Var Order}Let $X$ and $Y$ be two non negative random variables
where the support of $X$ is assumed to be included in the support of $Y$ and
the support of $Y$ to be an interval (for the log-concavity). Then:

\begin{enumerate}
\item $X$ is said to be smaller than $Y$ in the log-concave order ($%
X\prec_{lc}Y$) if the ratio $\frac{f_{X}}{f_{Y}}$ is log-concave over the
support of $Y$.

\item $X$ is said to be smaller than $Y$ in the convex (concave) order ($%
X\prec_{cx\left( cv\right) }Y$) if $\mathbb{E}\left( \varphi\left( X\right)
\right) \leq\mathbb{E}\left( \varphi\left( Y\right) \right) $ for all convex
functions $\varphi$ (provided the expectations exist).

\item $X$ is said to be smaller than $Y$ in the increasing convex (concave)
order ($X\prec_{icx\left( icv\right) }Y$) if $\mathbb{E}\left( \varphi\left(
X\right) \right) \leq\mathbb{E}\left( \varphi\left( Y\right) \right) $ for
all increasing convex (concave) functions $\varphi$ (provided the
expectations exist).
\end{enumerate}
\end{definition}

Following \cite{shaked2007}, $X\prec_{icx}(\prec_{icv}) Y$ roughly means
that $\mathbb{E}(X)\leq\mathbb{E}(Y)$ (location condition) plus the fact
that $X$ is less (more) ``variable'' than $Y$, in a stochastic sense. Also, $%
X\prec_{cx}(\prec_{cv})Y$ is equivalent to $X\prec_{icx}(\prec_{icv})Y$ plus 
$\mathbb{E}(X)=\mathbb{E}(Y)$. 

Setting $X$ and $Y$ to be two non negative random variables, we recall
	\cite[page 182]{shaked2007} that $X\prec_{icx}Y$ if and only if
	\begin{equation}\label{icx}
	\int_{x}^{+\infty}\bar{F}_{X}\left(  u\right)  ~du\leq\int_{x}^{+\infty}%
	\bar{F}_{Y}\left(  u\right)  ~du\text{ for all }x\geq0
	\end{equation}
	and that $X\prec_{icv}Y$ if and only if
	\begin{equation}\label{icv}
	\int_{0}^{x}F_{X}\left(  u\right)  ~du\geq\int_{0}^{x}F_{Y}\left(  u\right)
	~du\text{ for all }x\geq0.
	\end{equation}

Finally, the usual stochastic order (and
hence the likelihood ratio order as well) implies both increasing convex and
concave orders \cite[p.61]{muller2002}.

We next come to reminder on gamma distribution. Let $a,b>0$. We recall that
the gamma distribution $\Gamma \left( a,b\right) $ with parameters $\left(
a,b\right) $ admits 
\begin{equation*}
f_{a,b}\left( x\right) =\frac{b^{a}}{\Gamma \left( a\right) }x^{a-1}e^{-bx}%
\mathbf{1}_{\mathbb{R}_{+}}\left( x\right)
\end{equation*}%
as p.d.f. (with respect to Lebesgue measure) and that the corresponding mean
and variance are $\frac{a}{b}$ and $\frac{a}{b^{2}}$, respectively. We shall
also make use of the following well-known facts repeatedly, without further
notification: If $X$ is gamma distributed $\Gamma \left( a,b\right) $, then $%
c~X$ is gamma distributed $\Gamma \left( a,b/c\right) $ for all $c>0$. If $X_{1},\cdots ,X_{n}$ are independent gamma distributed random variables
with respective distributions $\Gamma \left( a_{1},b\right) ,\cdots ,\Gamma
\left( a_{n},b\right) $, then $\sum_{i=1}^{n}X_{i}$ is gamma distributed $%
\Gamma \left( \sum_{i=1}^{n}a_{i},b\right) $.

Finally, the following technical result may be found in \cite[p. 62]%
{muller2002}.

\begin{lemma}
\label{Lem gam} Let $X$ and $Y$ be gamma distributed random variables  
with parameters $\left( a_{1},b_{1}\right) $ and $\left(
a_{2},b_{2}\right) $, respectively, where $a_{i},b_{i}>0$ for $i=1,2$. Then:

\begin{enumerate}
\item If $a_{1}\leq a_{2}$ and $b_{1}\geq b_{2}$, then $X\prec_{lr}Y$;

\item If $a_{1}\geq a_{2}$ and $a_{1}/b_{1}\leq a_{2}/b_{2}$, then $%
X\prec_{icx}Y$;

\item If $a_{1}\leq a_{2}$, $b_{1}\leq b_{2}$ and $a_{1}/b_{1}\leq
a_{2}/b_{2}$, then $X\prec_{icv}Y$.
\end{enumerate}
\end{lemma}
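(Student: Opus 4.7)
For part 1, I would simply compute the likelihood ratio
\[
\frac{f_{a_2,b_2}(x)}{f_{a_1,b_1}(x)} \;=\; \frac{b_2^{a_2}\,\Gamma(a_1)}{b_1^{a_1}\,\Gamma(a_2)}\, x^{a_2-a_1}\, e^{-(b_2-b_1)x}, \qquad x>0.
\]
Under $a_1\le a_2$ and $b_1\ge b_2$ the exponent $a_2-a_1$ and the coefficient $-(b_2-b_1)$ are both non-negative, so this ratio is non-decreasing on $(0,+\infty)$; hence $X\prec_{lr}Y$.

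For parts 2 and 3, my plan is to insert an intermediate gamma $Z$ whose parameters match $Y$'s rate and $X$'s mean (for part 2) or $X$'s rate and $Y$'s mean (for part 3), and decompose each inequality into a stochastic-order step (handled via part 1 or direct additivity of gammas) plus an equal-mean, different-shape step. Concretely, for part 2, let $Z\sim\Gamma(\tilde a,b_2)$ with $\tilde a = a_1 b_2/b_1$. The assumptions $a_1\ge a_2$ and $a_1/b_1\le a_2/b_2$ imply $b_1\ge b_2$ and $\tilde a\le a_2$, so $Y\stackrel{d}{=}Z+V$ for an independent $V\sim\Gamma(a_2-\tilde a,b_2)$, yielding $Z\prec_{st}Y$ and hence $Z\prec_{icx}Y$. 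It remains to compare $X\sim\Gamma(a_1,b_1)$ with $Z\sim\Gamma(\tilde a,b_2)$: both have mean $a_1/b_1$ and $X$ has the larger shape ($a_1\ge\tilde a$). For part 3 I mirror this with $Z\sim\Gamma(\tilde a,b_1)$, $\tilde a=b_1 a_2/b_2$, so that $X\prec_{st}Z$ (hence $X\prec_{icv}Z$) while $Z$ and $Y$ have equal means with $Y$ carrying the larger shape.

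The crux in both parts is therefore the auxiliary fact that, among gamma distributions with a fixed mean, a larger shape parameter gives a smaller variable in the convex order (equivalently, a larger one in the concave order), since then $X\prec_{cx}Z$ closes part 2 via transitivity of $\prec_{icx}$, and $Y\prec_{cx}Z$, i.e.\ $Z\prec_{cv}Y\Rightarrow Z\prec_{icv}Y$, closes part 3. To prove this auxiliary comparison between two gammas $U\sim\Gamma(\alpha,\beta)$ and $V\sim\Gamma(\alpha',\beta')$ with $\alpha/\beta=\alpha'/\beta'$ and $\alpha>\alpha'$ (so $\beta>\beta'$ as well), I would apply the Karlin--Novikoff sign-change criterion for the convex order. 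The ratio $f_U/f_V$ is proportional to $x^{\alpha-\alpha'}e^{-(\beta-\beta')x}$, which is unimodal on $(0,+\infty)$ starting and ending at $0$; it therefore crosses the constant $1$ exactly twice, giving $f_U-f_V$ the sign sequence $-,+,-$. Combined with the equality of means, this two-crossing pattern is exactly the condition that yields $U\prec_{cx}V$.

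The main obstacle is this equal-mean comparison: showing that the ``cut twice with sign pattern $-,+,-$'' property on the densities, plus equality of means, implies $\prec_{cx}$. This is classical (it is precisely the cut criterion exploited in Shaked--Shanthikumar and in Müller--Stoyan, p.~62, which the lemma cites), and the rest of the argument is a bookkeeping exercise: checking the parameter inequalities on $\tilde a$, identifying the correct direction of each stochastic-order step, and chaining them via the transitivity of $\prec_{icx}$ and $\prec_{icv}$.
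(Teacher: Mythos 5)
Your proposal is correct, but it cannot be compared step-by-step with a proof in the paper, because the paper gives none: Lemma~\ref{Lem gam} is simply quoted from M\"{u}ller and Stoyan (2002, p.~62). What you have written is a self-contained reconstruction of essentially the standard argument behind that citation. Part 1 is the routine monotone likelihood-ratio computation and is fine. For parts 2 and 3 your bookkeeping checks out: in part 2, $a_1\ge a_2$ and $a_1/b_1\le a_2/b_2$ indeed force $b_1\ge b_2$, $\tilde a=a_1b_2/b_1\le a_2$ and $\tilde a\le a_1$, so $Z\prec_{st}Y$ by gamma additivity and the remaining step is the equal-mean comparison $X\prec_{cx}Z$; in part 3 the mirrored construction with $\tilde a=a_2b_1/b_2$ gives $a_1\le\tilde a\le a_2$, $X\prec_{st}Z\Rightarrow X\prec_{icv}Z$ and $Y\prec_{cx}Z\Leftrightarrow Z\prec_{cv}Y\Rightarrow Z\prec_{icv}Y$, and transitivity closes both parts. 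The equal-mean step via the Karlin--Novikoff cut criterion is also sound: for gammas with a common mean and shapes $\alpha>\alpha'$ the density ratio is proportional to $x^{\alpha-\alpha'}e^{-(\beta-\beta')x}$ with $\beta>\beta'$, which vanishes at $0$ and $+\infty$ and is unimodal, so $f_U-f_V$ has sign pattern $-,+,-$ and the cut criterion with equal means yields $U\prec_{cx}V$. Two small points would tighten the write-up: justify ``crosses the level $1$ exactly twice'' (the unimodal ratio with zero limits either never exceeds $1$, which would force the two densities to coincide, or crosses twice), and dispose of the degenerate cases $\tilde a=a_2$ (part 2) and $\tilde a=a_1$ or $\tilde a=a_2$ (part 3), where the interposed variable coincides in law with one of the endpoints and the corresponding step is trivial. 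In short, the paper buys brevity by citing the textbook; your route supplies the missing proof and is the one the cited source itself uses, so it is a legitimate and complete substitute.
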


\section{The two models of imperfect repairs}

In the sequel of this work, we set $\left( X_{t}\right) _{t\geq0}$ to be the
{\black intrinsic (out of repair)} degradation process of the system. We assume that $\left(
X_{t}\right) _{t\geq0}$ follows a non homogeneous gamma process with
parameters $A(\cdot)$ and $b$, where $A(\cdot):\mathbb{R}_{+}\longrightarrow 
\mathbb{R}_{+}$ is continuous and non-decreasing with $A\left( 0\right) =0$,
and $b>0$. We recall that $\left( X_{t}\right) _{t\geq0}$ is a process with
independent increments such that $X_{0}=0$ almost surely (a.s.) and such that each increment $X_{t+s}-X_{t}$ is gamma
distributed $\Gamma(A(t+s)-A(t),b)$ for all $s,t>0$.

The system is periodically and instantaneously maintained each $T$ units of
time. For modeling purpose, we set $X^{\left( i\right) },i\in \mathbb{N}%
^{\ast }$ to be i.i.d. copies of $X=\left( X_{t}\right) _{t\geq 0}$, where $%
X^{\left( i\right) }$ describes the evolution of the deterioration level
between the $i$-th and $(i+1)$-th maintenance actions. For each imperfect
repair model, the maintenance {\black efficiency} is measured by an Euclidian parameter $%
\rho \in (0,1)$.

\subsection{First model: Arithmetic Reduction of Deterioration of order 1
(ARD1)}

In this model, the maintenance action instantaneously removes the $\rho\%$ of
the degradation accumulated by the system from the last
maintenance action (or from the origin). Let $\left( Y_{t}\right) _{t\geq0}$
be the process that describes the degradation level of the maintained system
under this model of repair.

The ARD1 model is developed as follows: At the beginning, the system
deteriorates according to $X^{(1)}$ and it is first maintained at time $T$.
This provides: 
\begin{equation*}
Y_{t}=X_{t}^{\left( 1\right) }\quad t<T,\quad \quad Y_{T}=\left( 1-\rho
\right) X_{T}^{\left( 1\right) }.
\end{equation*}%
Between $T$ and $2T$, the system deteriorates according to $X^{(2)}$. The
age of the system is unchanged at time $T$ and we simply have 
\begin{equation*}
Y_{t}=Y_{T}+\left( X_{t}^{\left( 2\right) }-X_{T}^{\left( 2\right) }\right)
\end{equation*}%
for all $T\leq t<2T$, and at the second maintenance time $2T$: 
\begin{equation*}
Y_{2T}=Y_{T}+\left( 1-\rho \right) \left( X_{2T}^{\left( 2\right)
}-X_{T}^{\left( 2\right) }\right) .
\end{equation*}%
More generally, we get:%
\begin{eqnarray} 
Y_{t} &=&Y_{nT}+\left( X_{t}^{\left( n+1\right) }-X_{nT}^{\left( n+1\right)
}\right) \text{ for all }nT\leq t<\left( n+1\right) T, \\ \label{Yt}
Y_{\left( n+1\right) T} &=&Y_{nT}+\left( 1-\rho \right) \left( X_{\left(
n+1\right) T}^{\left( n+1\right) }-X_{nT}^{\left( n+1\right) }\right) \label{YDiffT}
\end{eqnarray}%
where $X_{t}^{\left( n+1\right) }-X_{nT}^{\left( n+1\right) }$ is gamma
distributed $\Gamma (A\left( t\right) -A\left( nT\right) ,b)$ for all $t\in %
\left[ nT,\left( n+1\right) T\right] $. Hence %
\begin{equation} \label{YT}
Y_{nT}=\left( 1-\rho \right) \sum_{i=1}^{n}\left( X_{iT}^{\left( i\right)
}-X_{\left( i-1\right) T}^{\left( i\right) }\right)
\end{equation}%
is gamma distributed $\Gamma \left( A\left(
nT\right) ,\frac{b}{1-\rho }\right)$. 
%\begin{equation*}
%\Gamma \left( \sum_{i=1}^{n}\left( A\left( iT\right) -A\left( \left(
%i-1\right) T\right) \right) ,\frac{b}{1-\rho }\right) =\Gamma \left( A\left(
%nT\right) ,\frac{b}{1-\rho }\right) .
%\end{equation*}

Except for the case $\rho \rightarrow 0^{+}$, if $t \, \mbox{mod} \, T \neq
0 $ , $Y_{t}$ is the sum of two independent and gamma distributed random
variables (r.v.s) with different scale parameters, and it is not gamma
distributed. Its expectation and variance are given by:
\begin{eqnarray} \label{EYt}
\mathbb{E}\left( Y_{t}\right) =\frac{A\left(
t\right) -\rho ~A\left( nT\right) }{b},  \quad 
var\left( Y_{t}\right) =  \frac{A\left( t\right) -\rho \left( 2-\rho \right) A\left( nT\right) }{%
b^{2}}, 
\end{eqnarray}
for $nT\leq t<\left( n+1\right) T$.

\begin{remark}
\label{Rem EV Y}It is easy to check that $\mathbb{E}\left( Y_{t}\right) $
and $var\left( Y_{t}\right) $ are decreasing with respect to $\rho $.
\end{remark}

Next result provides some more insight than the previous remark into the
impact of the maintenance efficiency $\rho $ on the deterioration level of the
maintained system. To state it, two different efficiency parameters $\rho
_{1}$ and $\rho _{2}$ ($\rho _{1},\rho _{2}\in (0,1)$) are envisioned. The
resulting ARD1 processes are denoted by $\left( Y_{t}^{\left( i\right)
}\right) _{t\geq 0}$ for $i=1,2$, respectively.

\begin{proposition}
\label{Prop Y rho}We have:

\begin{enumerate}
\item $Y_{nT}$ decreases with respect to $\rho $ in the sense of the likelihood order: If $%
\rho_{1}<\rho_{2}$, then $Y_{nT}^{\left( 2\right) }\prec_{lr}Y_{nT}^{\left(
1\right) }$;

\item $Y_{t}$ decreases with respect to $\rho $ in the sense of both increasing convex and concave
orders: If $\rho_{1}<\rho_{2}$, then $Y_{t}^{\left( 2\right)
}\prec_{icx}Y_{t}^{\left( 1\right) }$ and $Y_{t}^{\left( 2\right)
}\prec_{icv}Y_{t}^{\left( 1\right) }$.
\end{enumerate}
\end{proposition}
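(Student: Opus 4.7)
The plan is to reduce both statements to direct applications of Lemma \ref{Lem gam} combined with the well-known closure of the orders $\prec_{icx}$ and $\prec_{icv}$ under convolution with an independent random variable.

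For the first assertion, I would simply recall from \eqref{YT} that $Y_{nT}^{(i)}$ is gamma distributed with parameters $\bigl(A(nT),\, b/(1-\rho_i)\bigr)$ for $i=1,2$. The two shape parameters coincide, while the rate $b/(1-\rho)$ is strictly increasing in $\rho$; hence, under $\rho_1<\rho_2$, the shapes satisfy $A(nT)=A(nT)$ and the rates satisfy $b/(1-\rho_2)\ge b/(1-\rho_1)$. Lemma \ref{Lem gam}(1) then yields $Y_{nT}^{(2)}\prec_{lr}Y_{nT}^{(1)}$ at once.

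For the second assertion, I would fix $n$ such that $nT\le t<(n+1)T$ and use the decomposition from \eqref{Yt},
\begin{equation*}
Y_{t}^{(i)} = Y_{nT}^{(i)} + W, \qquad W := X_{t}^{(n+1)} - X_{nT}^{(n+1)},
\end{equation*}
where, by construction, $W$ is independent of $Y_{nT}^{(i)}$ and, crucially, does not depend on $\rho$ (so the two processes can be coupled by means of the same $X^{(n+1)}$). Part 1, together with the fact that the likelihood ratio order implies both the increasing convex and increasing concave orders (recalled just after Definition \ref{Def Var Order}), gives $Y_{nT}^{(2)}\prec_{icx}Y_{nT}^{(1)}$ and $Y_{nT}^{(2)}\prec_{icv}Y_{nT}^{(1)}$. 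It then remains to transport these orderings through the addition of $W$: for any increasing convex (resp. increasing concave) $\varphi$, the map $x\mapsto \varphi(x+w)$ is again increasing convex (resp. increasing concave) for every fixed $w\ge 0$, so conditioning on $W$ and using the independence of $W$ from $Y_{nT}^{(i)}$ yields $\mathbb{E}[\varphi(Y_t^{(2)})]\le\mathbb{E}[\varphi(Y_t^{(1)})]$, which is precisely the desired inequality.

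The argument is essentially routine, so the main (minor) subtlety is just to be explicit about the coupling: one must emphasize that the increment $W$ is built from the same copy $X^{(n+1)}$ in both processes and is independent of $Y_{nT}^{(i)}$, which is what legitimizes the conditioning step and makes the closure of $\prec_{icx}$ and $\prec_{icv}$ under convolution applicable. Aside from this, no further computation is needed.
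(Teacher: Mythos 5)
Your proposal is correct and follows essentially the same route as the paper: part 1 via Lemma \ref{Lem gam}(1) applied to the $\Gamma\left(A(nT),\,b/(1-\rho_i)\right)$ laws, and part 2 via the decomposition $Y_t^{(i)}=Y_{nT}^{(i)}+\bigl(X_t^{(n+1)}-X_{nT}^{(n+1)}\bigr)$ with the same increment for both processes, combined with the stability of $\prec_{icx}$ and $\prec_{icv}$ under convolution. The only cosmetic difference is that you verify this closure property directly by conditioning on the increment, whereas the paper cites \cite[Thm 4.A.8 (d)]{shaked2007}.
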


\begin{proof}
From (\ref{YT}), we know that $Y_{nT}^{\left( i\right) }\sim \Gamma \left( A\left( nT\right) ,%
\frac{b}{1-\rho _{i}}\right) $ for $i=1,2$. If $\rho _{1}<\rho _{2}$, we
have $\frac{b}{1-\rho _{1}}\leq \frac{b}{1-\rho _{2}}$ and the first result
follows from point 1 of Lemma \ref{Lem gam}, which entails that $%
Y_{nT}^{\left( 2\right) }\prec _{icx}Y_{nT}^{\left( 1\right) }$ and $%
Y_{nT}^{\left( 2\right) }\prec _{icv}Y_{nT}^{\left( 1\right) }$.

{\color{black} Let $nT\leq t<\left( n+1\right) T$. Based on (\ref{Yt}), we have
\begin{equation*}
Y_{t}^{\left( 2\right) }-Y_{nT}^{\left( 2\right) }=Y_{t}^{\left( 1\right)
}-Y_{nT}^{\left( 1\right) }=X_{t}^{(n+1)}-X_{nT}^{(n+1)}
\end{equation*}%
so that $Y_{t}^{\left( 2\right) }-Y_{nT}^{\left( 2\right) } \prec
_{icx}Y_{t}^{\left( 1\right) }-Y_{nT}^{\left( 1\right) }$.
We derive that} $$Y_{t}^{\left( 2\right) }=Y_{nT}^{\left( 2\right) }+\left(
Y_{t}^{\left( 2\right) }-Y_{nT}^{\left( 2\right) }\right) \prec
_{icx}Y_{t}^{\left( 1\right) }=Y_{nT}^{\left( 1\right) }+\left(
Y_{t}^{\left( 1\right) }-Y_{nT}^{\left( 1\right) }\right) $$ because the icx
order is stable through convolution \cite[Thm 4.A.8 (d) page 186]%
{shaked2007}. The proof is similar for $\prec _{icv}$.
\end{proof}

\begin{remark}\label{Rem Y}
Note that $\prec_{lr}$ is not stable under convolution in a general setting
so that the second point of the previous proposition would not be valid for
this order (except if $X^{(n+1)}_{t}-X^{(n+1)}_{nT}$ has a log-concave
density \cite[Thm 1.C.9. page 46]{shaked2007}, namely if $A\left( t\right)
-A\left( nT\right) \geq1$ for $t \in [nT,(n+1)T)$). {\black A counter-example showing that the second point does not hold for the likelihood order is provided in Remark \ref{Rem 4} later on.}
\end{remark}

\subsection{Second model: Arithmetic Reduction of (virtual) Age of order 1
(ARA1)}
{\black The ARA1 model is based on the notion of virtual age as introduced by \cite{Kijima1989} in the context of recurrent events, which we first recall. Let us consider a system with successive lifetimes $U_1$, $U_2$, \dots, $U_n$, \dots and instantaneous repairs at times $T_n=\sum_{i=1}^{n}U_i,n=1,2,\dots$ (with $T_0=0$). Let $\bar{F}$ be the survival function of $U_1$. Assume that there exists a sequence $(V_n)_{n\geq 1}$ of non negative random variables such that after the $n-th$ maintenance action at time $T_n$, the next lifetime $U_{n+1}$ has the same conditional distribution given $V_n=y$ as the remaining lifetime of a new system at time $y$:
	\begin{equation}
		\mathbb{P}(U_{n+1}>t|V_n=y)=\mathbb{P}(U_{1}-y>t|U_{1}>y)=\frac{\bar{F}(t+y)}{\bar{F}(y)}\label{virtual age}
	\end{equation}
for  all $t,y>0$.
	Then, $V_n$ is called the virtual age of the system at time $T_n$. After a maintenance action at time $T_n$, the next lifetime $U_{n+1}$ has the same distribution as if the calendar age of the system were equal to $V_n$. Between repairs, the virtual age evolves with speed 1, just as the calendar time, so that for $T_n\leq t <T_{n+1}$, the virtual age is $V(t)=V_n+(t-T_n)$. In the context of the present paper, we use a similar notion of virtual age for deteriorating systems. To be more specific, considering a system with intrinsic  deterioration modeled by $(X_t)_{t\geq 0}$ and instantaneous repairs at times $T_n,n=1,2,\dots$ (with $T_0=0$), we say that $V(t)=V_n+(t-T_n)$ stands for the virtual age of the system at time $t \in [T_n,T_{n+1})$ if, given $V_n$, the deterioration level of the maintained system is conditionally identically distributed as $(X_{V_n+t-T_n})_{T_n\leq t<T_{n+1}}$ (with independence between the deterioration and the virtual age, details further). The idea is just the same as for recurrent events: After a maintenance action at time $T_n$, the system behaves just as if its calendar age were equal to $V_n$ at time $T_n$ and between maintenance actions, the virtual age evolves with speed 1.
	
	We now come to the specific virtual age model developed in this paper, which is called Arithmetic Reduction of Age of order 1 (ARA1) model after \cite{doyen2004}. It is based on the Kijima II imperfect repair model \cite{Kijima1989} and each (periodic) repair removes the $\rho\%$ of the age accumulated by the system since the last maintenance action (or from the origin).}

Let $\left( Z_{t}\right) _{t\geq 0}$ be the process that describes the
degradation level of the maintained system under this model of repair. At
the first maintenance time $T$, the virtual age of the system is reduced by $\rho T$ units and it becomes $V(T)=(1-\rho )T$. {\black Recalling that $(X_t^{(i)})_{t \geq 0}$ models the deterioration between the $i$-th and $i+1$-th repairs,} the degradation level on $[0,T]$ is given by 
\begin{equation*}
Z_{t}=X_{t}^{\left( 1\right) }\text{ for }t<T,\quad \quad Z_{T}=X_{\left(
1-\rho \right) T}^{\left( 1\right) }.
\end{equation*}%
{\color{black} It
means that, at time $T$, the system goes back into its past: 
 The system is rejuvenated (from the age $T$ to the age $(1-\rho)T)$ and the deterioration level is reduced (from $X_T^{(1)}$ to $X_{(1-\rho)T}^{(1)}$). In case of $A(\cdot)$ convex, the deterioration rate is also reduced (from $A'(T)$ to $A'((1-\rho)T)$).

For $T\leq t<2T$, the system age is $V(t)=V(T)+(t-T)=t-\rho T$. The corresponding deterioration level is identically distributed as $X_{t-\rho T}$. It is equal to the deterioration level at time $T$ plus the increment of deterioration on $(T,t]$, which leads to
\begin{equation*}
Z_{t}=Z_{T}+\left( X_{t-\rho T}^{\left( 2\right) }-X_{\left( 1-\rho \right)
T}^{\left( 2\right) }\right),
\end{equation*}%
where $X_{t-\rho T}^{\left( 2\right) }-X_{\left( 1-\rho \right)
	T}^{\left( 2\right) }$ is independent on $Z_T=X_{\left(
	1-\rho \right) T}^{\left( 1\right)}$.}
At time $2T^{-}$ (just before the repair), the age of the system is $V(2T^{-})=2T-\rho T$ which is reduced by $\rho T$ at time $2T$. The age hence is $V(2T)=2(1-\rho )T$ at time $2T$. The corresponding deterioration level is given by:
\begin{equation*}
Z_{2T}=Z_{T}+\left( X_{2\left( 1-\rho \right) T}^{\left( 2\right)
}-X_{\left( 1-\rho \right) T}^{\left( 2\right) }\right) .
\end{equation*}%
More generally, for $nT\leq t<\left( n+1\right) T$, the virtual age at time $t$ is $V(t)=t-\rho nT$ (just as for an ARA1 model for recurrent events, see \cite{doyen2004})) and the system degradation is given by
\begin{eqnarray} 
Z_{t}& =& Z_{nT}+\left( X_{t-\rho nT}^{\left( n+1\right) }-X_{\left( 1-\rho
\right) nT}^{\left( n+1\right) }\right), \\  \label{Zt}
Z_{(n+1)T}& =& Z_{nT}+\left( X_{\left( 1-\rho \right)
(n+1)T}^{(n+1)}-X_{\left( 1-\rho \right) nT}^{(n+1)}\right) \label{ZDifft}
\end{eqnarray}%
where $X_{t-\rho nT}^{\left( n+1\right) }-X_{\left( 1-\rho \right)
nT}^{\left( n+1\right) }$ is gamma distributed $\Gamma \left( A\left( t-\rho
nT\right) -A\left( \left( 1-\rho \right) nT\right) ,b\right) $ for all $t\in %
\left[ nT,\left( n+1\right) T\right] $. 

Hence 
\begin{equation}\label{ZT}
Z_{nT}=\sum_{i=1}^{n}\left( X_{\left( 1-\rho \right) iT}^{\left( i\right)
}-X_{\left( 1-\rho \right) \left( i-1\right) T}^{\left( i\right) }\right)
\end{equation}%
and it is gamma distributed $\Gamma (A\left( (1-\rho)nT \right),b)$. Here, $%
Z_{t}$ is the sum of two independent gamma distributed r.v.s which share the
same scale parameter $b$ and it is gamma distributed $\Gamma (A(t-\rho
nT),b) $. 

Also: 
\begin{eqnarray} 
\mathbb{E}\left( Z_{t}\right) =\frac{A\left( t-\rho nT\right) }{b}, \quad 
var\left( Z_{t}\right) =\frac{A\left( t-\rho nT\right) }{b^{2}} \label{EZt}
\end{eqnarray}%
for all $nT\leq t<(n+1)T$.

\begin{remark}
\label{Rem EV Z}Here again, $\mathbb{E}\left( Z_{t}\right) $ and $var\left(
Z_{t}\right) $ are decreasing with respect to~$\rho $.
\end{remark}

Just as for the ARD1 model, we next set $\left( Z_{t}^{\left( i\right)
}\right) _{t\geq0}$ to be the ARA1 process with repair efficiency $\rho_i$
for $i=1, 2$.

\begin{proposition}
\label{Prop Z rho}$Z_{t}$ decreases with respect to $\rho$ for the
likelihood ratio order (and hence also for both increasing convex and
concave orders): If $\rho_1 < \rho_2$, then $Z_{t}^{\left( 2\right)
}\prec_{lr}Z_{t}^{\left( 1\right) }$.
\end{proposition}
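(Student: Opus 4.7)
The plan is to exploit the fact that, unlike in the ARD1 model, the ARA1 rejuvenation acts on the time axis and therefore keeps the scale parameter $b$ of the gamma distribution untouched. This forces $Z_t$ to be exactly gamma distributed for every $t$ (not only at maintenance epochs), and the comparison reduces to a one-line application of Lemma \ref{Lem gam}.

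Concretely, I would fix $t\geq 0$ and let $n\in\mathbb{N}$ be the unique integer with $nT\leq t<(n+1)T$. From the discussion following \eqref{ZDifft}, $Z_t^{(i)}$ is gamma distributed $\Gamma(A(t-\rho_i nT),b)$ for $i=1,2$; note in particular that both r.v.s share the same scale parameter $b$, which is the essential feature enabling an $\prec_{lr}$ comparison. Since $\rho_1<\rho_2$ and $n\geq 0$, one has $t-\rho_2 nT\leq t-\rho_1 nT$, and the non-decreasingness of $A(\cdot)$ then yields
\[
A(t-\rho_2 nT)\leq A(t-\rho_1 nT).
\]
Applying point~1 of Lemma \ref{Lem gam} with $a_1=A(t-\rho_2 nT)$, $a_2=A(t-\rho_1 nT)$ and $b_1=b_2=b$ delivers $Z_t^{(2)}\prec_{lr} Z_t^{(1)}$. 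The icx and icv consequences follow from the classical fact, recalled at the end of Section 2, that $\prec_{lr}$ implies $\prec_{sto}$, which in turn implies both $\prec_{icx}$ and $\prec_{icv}$.

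There is no real obstacle here, and this is precisely the point worth emphasising in the proof: the ARA1 model yields a genuinely stronger conclusion than Proposition~\ref{Prop Y rho}. The contrast with the ARD1 case (where $Y_t$ on the open interval is a convolution of two gamma laws with \emph{different} scale parameters, hence not gamma, so that likelihood ratio comparison fails in general — see Remark \ref{Rem Y}) is the conceptual content of the statement, and I would make this remark explicitly at the end of the argument.
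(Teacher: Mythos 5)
Your proof is correct and is essentially the paper's own argument: both identify $Z_t^{(i)}\sim\Gamma(A(t-\rho_i nT),b)$ for $nT\leq t<(n+1)T$, use the monotonicity of $A(\cdot)$ to order the shape parameters, and conclude by point~1 of Lemma~\ref{Lem gam} with a common scale parameter $b$. The closing observation contrasting this with the ARD1 case is a nice emphasis but is already the content of Remark~\ref{Rem Y}, not a new ingredient.
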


\begin{proof}
Let $\rho_{1}<\rho_{2}$ and $nT\leq t<\left( n+1\right) T$. For $i=1,2$, from (\ref{Zt}), we have:  
$$
Z_{t}^{\left( i\right) }\sim\Gamma\left( A\left( t-\rho_{i}nT\right)
,b\right), $$ with $A\left( t-\rho_{2}nT\right) \leq A\left(
t-\rho_{1}nT\right) $. We hence derive from Lemma \ref{Lem gam} that $%
Z_{t}^{\left( 2\right) }\prec_{lr}Z_{t}^{\left( 1\right) }$.
\end{proof}
{\black 
\begin{remark}\label{Rem 4}
	We can see from Propositions \ref{Prop Y rho} and \ref{Prop Z rho} that as expected, the more efficient the maintenance action is (namely the larger $\rho$ is), the smaller the deterioration level is for both ARD1 and ARA1 models. Based on \cite[Thm 1.C.5. p 44]{shaked2007}, the previous result implies for instance that, for an ARA1 model, we have 
	\begin{equation}\label{rem 4}
		[Z_t^{(2)}|Z_t^{(2)}>h]\leq_{sto}[Z_t^{(1)}|Z_t^{(1)}>h]
	\end{equation}
	for all $h>0$. Imagine that $h$ is an alert threshold in an application context and that the crossing of $h$ triggers a signal, then, the previous relation means that, given that the signal has already been triggered, the deterioration level is stochastically all the smaller as the efficiency of the maintenance action is higher. Now considering  $A(t)=t^{2},b=1,h=0.5,t=1.9,\rho_1=0.9<\rho_2=0.95$, we get that $\mathbb{E}(Y_t^{(2)}|Y_t^{(2)}>h)\simeq 1.16 > \mathbb{E}(Y_t^{(1)}|Y_t^{(1)}>h)\simeq 1.08$, which shows that (\ref{rem 4}) is not valid any more for the ARD1 model, in concordance with 
	Remark \ref{Rem Y}. The stronger likelihood ratio result obtained for the ARA1 model may hence lead to different consequences from those for the ARD1 model in an application context (and in particular, conditional expectations are not necessarily ranked in an intuitive way).
\end{remark}
}
\section{Comparison of the moments}

We now come to the main object of the paper, which is the comparison between
the two models of imperfect repairs. Note that, in an application context,
there is no reason why the estimated repair efficiency should be the same
when the impact of the maintenance is modeled by an ARD1 or ARA1 model. Our point hence is to compare $%
Y_{t}^{(1)}$ and $Z_{t}^{(2)}$, with efficiency $\rho _{1}$ and $\rho _{2}$,
respectively. In this section, we focus on the comparison of their respective means and variances.

\begin{proposition}
\label{means}Let us consider the two following assertions:
\begin{eqnarray}
\mathbb{E}\left( Z_{t}^{(2)}\right) &\geq& \mathbb{E}\left( Y_{t}^{(1)}\right) 
\text{ for all }t,T>0  \label{eq4}\\
\text{and } A\left( (1-\rho _{2}) t\right) &\geq& (1-\rho _{1})A(t)\text{ for all }t>0.
\label{eqA1}
\end{eqnarray}
Then:
\begin{enumerate}
	\item Assertion $(\ref{eq4})$ implies assertion $(\ref{eqA1})$.
	\item If $A(\cdot )$ is concave, then the converse is also true (namely $\left( \ref{eqA1}\right) \Longrightarrow
	\left( \ref{eq4}\right) $).
	\item As a special case, if $A(\cdot )$ is concave and $\rho _{2}\leq \rho _{1}$, then $\left( \ref{eq4}\right) $ is true.
\end{enumerate}
All the previous results are valid with reversed inequalities and concave
substituted by convex.
\end{proposition}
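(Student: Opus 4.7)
The plan is to unfold both sides of $(\ref{eq4})$ using the explicit mean formulas $(\ref{EYt})$ and $(\ref{EZt})$. Multiplying by $b$, for $nT \leq t < (n+1)T$ the inequality $(\ref{eq4})$ becomes
\[
A(t - \rho_2 nT) + \rho_1 A(nT) \;\geq\; A(t),
\]
so all three claims reduce to statements about the deterministic function $A$ alone. I will argue all three points starting from this rewritten form.

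For claim 1, I would specialize to the left endpoint $t = nT$ (valid since the interval is closed on the left). The inequality collapses to $A((1-\rho_2) nT) \geq (1-\rho_1) A(nT)$. Letting $n = 1$ and allowing $T$ to range over $(0,\infty)$ recovers $(\ref{eqA1})$ (with $T$ in the role of $t$).

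For claim 2, the key idea is to freeze $n$ and $T$ and study the function
\[
f(t) := A(t - \rho_2 nT) + \rho_1 A(nT) - A(t), \qquad t \in [nT,(n+1)T).
\]
For $nT \leq t_1 \leq t_2$ I would write
\[
f(t_2) - f(t_1) = \bigl[A(t_2 - \rho_2 nT) - A(t_1 - \rho_2 nT)\bigr] - \bigl[A(t_2) - A(t_1)\bigr],
\]
and invoke the fact that concavity of $A$ makes the increment map $v \mapsto A(v+h) - A(v)$ non-increasing in $v$ for any fixed $h \geq 0$. Applied with $h = t_2 - t_1$ and the shift $\rho_2 nT \geq 0$, this yields $f(t_2) \geq f(t_1)$, so $f$ is non-decreasing on $[nT,(n+1)T)$. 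It therefore suffices to check $f(nT) \geq 0$, but $f(nT) = A((1-\rho_2)nT) - (1-\rho_1) A(nT)$, which is non-negative by $(\ref{eqA1})$ applied at $nT$.

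Claim 3 then follows in one line: a non-decreasing concave $A$ with $A(0)=0$ is star-shaped from the origin, so for any $\alpha \in [0,1]$ one has $A(\alpha t) \geq \alpha A(t)$; taking $\alpha = 1-\rho_2$ and using $\rho_2 \leq \rho_1$ gives $A((1-\rho_2)t) \geq (1-\rho_2) A(t) \geq (1-\rho_1) A(t)$, i.e.\ $(\ref{eqA1})$, whence $(\ref{eq4})$ by claim 2. The reversed statements under convex $A$ are obtained by the mirror argument: convexity flips the monotonicity of the increment map, so $f$ becomes non-increasing on $[nT,(n+1)T)$ and one again evaluates at $t = nT$; likewise convex $A$ with $A(0)=0$ satisfies $A(\alpha t) \leq \alpha A(t)$, which combined with the reversed ordering $\rho_1 \leq \rho_2$ delivers the reversed $(\ref{eqA1})$. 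I do not expect any real obstacle; the only point that requires care is establishing the monotonicity of $f$ without assuming $A$ smooth, which the increment-based rewriting above handles cleanly by using concavity/convexity directly, rather than through a derivative.
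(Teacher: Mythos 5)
Your proposal is correct and follows essentially the same route as the paper: reduce $(\ref{eq4})$ to the pointwise inequality $A(t-\rho_2 nT)+\rho_1 A(nT)\geq A(t)$ via $(\ref{EYt})$--$(\ref{EZt})$, evaluate at $t=nT$ for point 1, use the fact that concavity makes $t\mapsto A(t-\rho_2 nT)-A(t)$ non-decreasing for point 2, and invoke $A(\alpha t)\geq\alpha A(t)$ (from concavity and $A(0)=0$) together with $\rho_2\leq\rho_1$ for point 3, with mirrored inequalities in the convex case. No gaps.
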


\begin{proof}
Based on $\left( \ref{EYt}\right) $ and $\left( \ref{EZt}\right) $,
assertion $\left( \ref{eq4}\right) $ is equivalent to 
\begin{equation}
A(t-\rho _{2}nT)\geq A(t)-\rho _{1}A(nT)  \label{eq5}
\end{equation}%
for all $T>0$, all $n\in \mathbb{N}$ and all $nT\leq t<\left( n+1\right) T$.
{\black Taking $t=nT$ in (\ref{eq5}), it implies $\left( \ref{eqA1}\right) $  for all $t=nN$ with $T>0$ and $n\in \mathbb{N}$, and hence for all $t>0$. This shows the first point.}

For the second point, assume $A(\cdot )$ to be concave. Then $%
A\left( t-\rho _{2}nT\right) -A(t)$ is non decreasing with respect to $t$
and for $nT\leq t<(n+1)T$, we get that 
\begin{equation}
A(t-\rho _{2}nT)-A(t)\geq A((1-\rho _{2})nT)-A(nT).  \label{eq1}
\end{equation}

Assuming $\left( \ref{eqA1}\right) $ to be true, we easily derive that 
\begin{equation*}
A(t-\rho _{2}nT)-A(t)\geq (1-\rho _{1})A(nT)-A(nT)=-\rho _{1}A\left(
nT\right)
\end{equation*}%
so that $\left( \ref{eq5}\right) $ is true. This implies $\left( \ref{eq4}%
\right) $ and the second point is proved.

In the specific case where $A(\cdot )$ is concave and $\rho _{2}\leq \rho
_{1}$, we have  
\begin{eqnarray}
A((1-\rho _{2})nT) =A\left( (1-\rho _{2})nT+\rho _{2}\:0\right)  
&\geq &(1-\rho _{2})A(nT)  \label{eq2} \\
&\geq &(1-\rho _{1})A(nT)  \notag
\end{eqnarray}%
for all $n$ and $T$ (using $A(0)=0$ in the first line). This implies that $%
\left( \ref{eqA1}\right) $ is true. Point three now is a direct consequence of point two.

The reasoning is similar for reversed inequalities and it is omitted.
\end{proof}

\begin{remark}
Note that Condition $\left( \ref{eqA1}\right) $ is less restrictive than $%
\rho _{2}\leq \rho _{1}$ in the concave case. (The same with a reverse
inequality in the convex case). For instance, consider $A\left( t\right)
=t^{\beta }$ with $0<\beta <1$. Then Condition $\left( \ref{eqA1}\right) $
means that $\left( 1-\rho _{2}\right) ^{\beta }\geq 1-\rho _{1}$, which is less restrictive than $\rho _{1}\geq \rho _{2}$.	
\end{remark}

In the following example, we look at the comparison of the expectations when the conditions in  Proposition \ref{means} are not fulfilled.
	\begin{example}\label{Ex Means} As a first case, we take $A\left(  t\right)  =e^{t}-1$ (convex function), $b=1$, $T=1$ $\rho_{1}=0.95$, $\rho_{2}=0.5$. As a second case, we take $A\left(  t\right)  =1-e^{-t}$ (concave function), $b=1$, $T=1$ $\rho_{1}=0.5$, $\rho_{2}=0.95$. Neither condition $\left( \ref{eqA1}\right) $ nor the reversed inequality is true on the whole real line. The corresponding expectations are plotted  in Figures \ref{Fig Mean1} (first case) and \ref{Fig Mean2} (second case). We can see that the respective means of $Y_{t}^{(1)}$ and $Z_{t}^{(2)}$ are not ordered in the same way on the whole real line.

		\begin{figure}[tbp]
			\begin{subfigure}{0.5\textwidth}
					\includegraphics[width=0.78\textwidth,angle =-90]{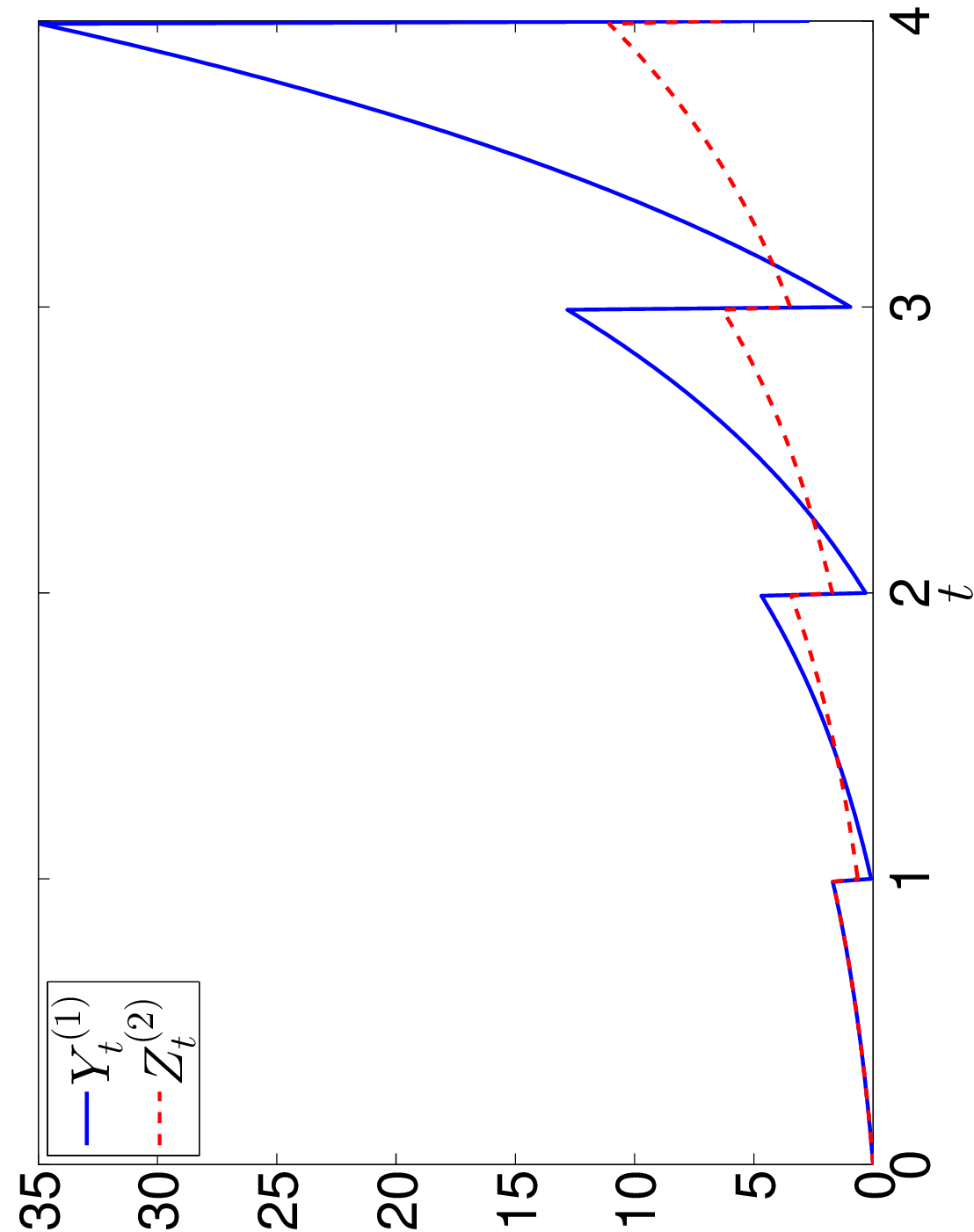}
				\caption{$A\left(  t\right)  =e^{t}-1$, $\rho_{1}=0.95$, $\rho_{2}=0.5$} \label{Fig Mean1}
			\end{subfigure}%
			\begin{subfigure} {0.5\textwidth}
				\includegraphics[width=0.78\textwidth,angle =-90]{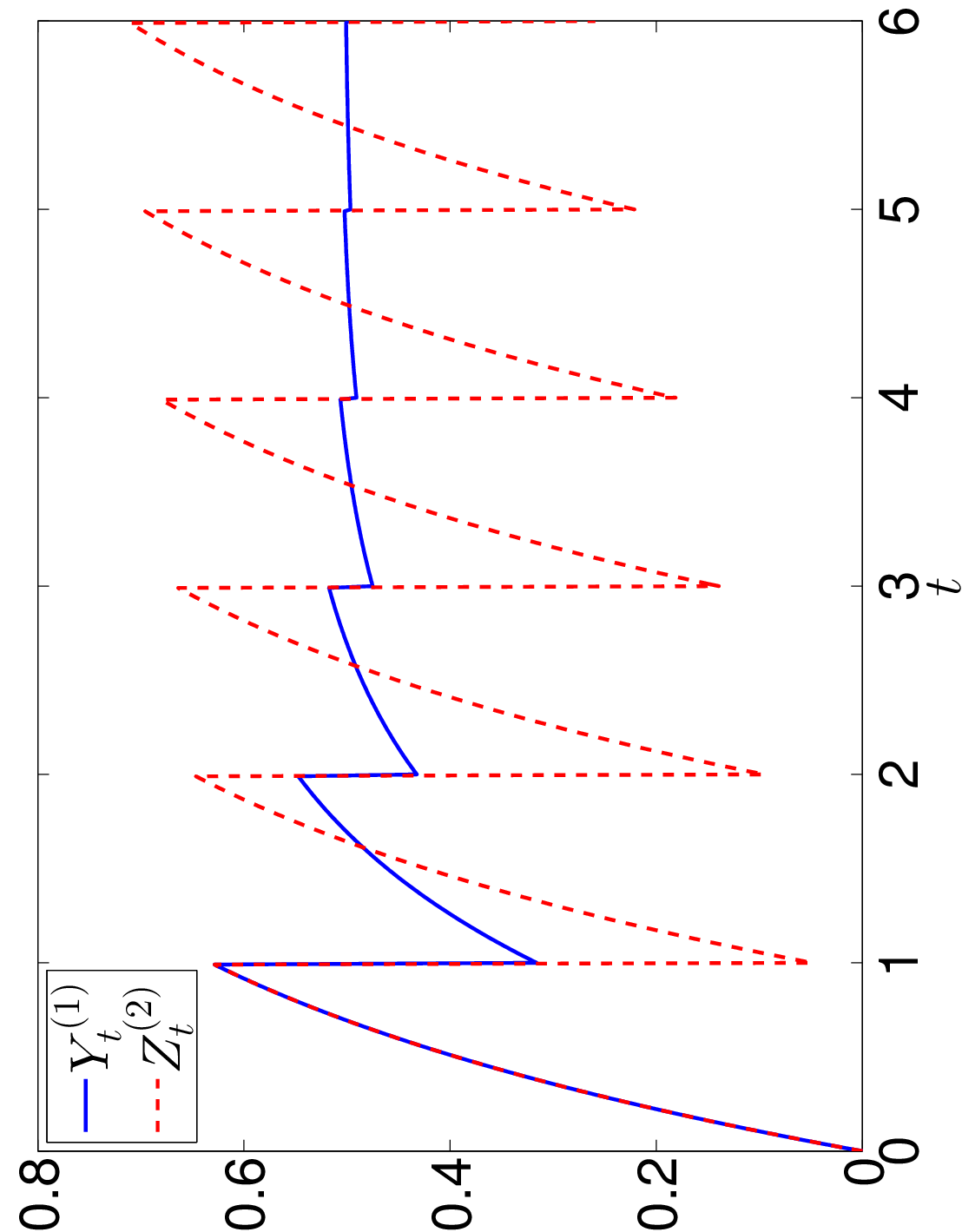}
				\caption{$A\left(  t\right)  =1-e^{-t}$,  $\rho_{1}=0.5$, $\rho_{2}=0.95$} \label{Fig Mean2}
			\end{subfigure}
			\caption{Expectations of $Y_{t}^{(1)}$ and $Z_{t}^{(2)}$ with $b=1$, $T=1$}
		\end{figure} 
	\end{example}

\begin{proposition}
\label{var}Let us consider the two following assertions:
\begin{eqnarray}
Var\left( Z_{t}^{(2)}\right) &\geq& Var\left( Y_{t}^{(1)}\right) \text{ for
	all }t,T>0  \label{V1}\\
\text{and }A\left( (1-\rho _{2})t\right) &\geq& (1-\rho _{1})^{2}A(t)\text{ for all }t>0.
\label{V2}
\end{eqnarray}
Then:
\begin{enumerate}
	\item Assertion $(\ref{V1})$ implies assertion $(\ref{V2})$.
	\item If $A(\cdot )$ is concave, then the converse is also true (namely $\left( \ref{V2}\right) \Longrightarrow
	\left( \ref{V1}\right) $). 
	\item As a special case, if $A(\cdot )$ is concave and $1-\rho _{2}\geq
	\left( 1-\rho _{1}\right) ^{2}$, then $\left( \ref{V1}\right) $ is true.
\end{enumerate}
All the previous results are valid with reversed inequalities, and concave
substituted by convex. 
\end{proposition}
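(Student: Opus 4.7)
The plan is to mirror the proof of Proposition \ref{means}, exploiting the closed-form expressions \eqref{EYt} and \eqref{EZt} for the variances. The only genuine change is that $\rho_1$ in condition \eqref{eqA1} gets replaced by $(1-\rho_1)^2$ in \eqref{V2}, which is precisely the factor one obtains from the identity $1-\rho_1(2-\rho_1)=(1-\rho_1)^2$ appearing in $\operatorname{Var}(Y_t^{(1)})$.

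First, I would rewrite \eqref{V1} in closed form. Using \eqref{EYt} and \eqref{EZt}, for $nT\leq t<(n+1)T$, \eqref{V1} is equivalent to
\begin{equation*}
A(t-\rho_2 nT)\geq A(t)-\rho_1(2-\rho_1)A(nT),
\end{equation*}
and since $\rho_1(2-\rho_1)=1-(1-\rho_1)^2$, this is the natural variance analogue of \eqref{eq5}. For point 1, I specialize this inequality at $t=nT$ (an admissible boundary of the interval) to obtain $A((1-\rho_2)nT)\geq (1-\rho_1)^2 A(nT)$. Letting $T>0$ and $n\in\mathbb{N}$ vary, the values $nT$ sweep out all of $(0,\infty)$, so \eqref{V2} follows on $\mathbb{R}_+^\ast$ (and trivially at $0$).

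For point 2, assume $A(\cdot)$ concave. Then $A(t-\rho_2 nT)-A(t)$ is non-decreasing in $t$ (its derivative equals $A'(t-\rho_2 nT)-A'(t)\geq 0$ because $A'$ is non-increasing). For $nT\leq t<(n+1)T$ this gives
\begin{equation*}
A(t-\rho_2 nT)-A(t)\geq A((1-\rho_2)nT)-A(nT).
\end{equation*}
Plugging in \eqref{V2} yields $A(t-\rho_2 nT)-A(t)\geq (1-\rho_1)^2 A(nT)-A(nT)=-\rho_1(2-\rho_1)A(nT)$, which is exactly the closed-form rewriting of \eqref{V1}.

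For point 3, under $A(\cdot)$ concave with $A(0)=0$, the function $s\mapsto A(s)/s$ is non-increasing, hence $A(\lambda s)\geq \lambda A(s)$ for every $\lambda\in[0,1]$. Applied with $\lambda=1-\rho_2$ and $s=nT$, and then using the assumption $1-\rho_2\geq(1-\rho_1)^2$, one gets $A((1-\rho_2)nT)\geq (1-\rho_2)A(nT)\geq (1-\rho_1)^2 A(nT)$, so \eqref{V2} holds and point 3 follows from point 2. The convex/reverse-inequality version follows by flipping all the inequalities in the concavity arguments; the proof is structurally identical. No step looks truly delicate here; the only thing to be careful about is not losing the squaring $(1-\rho_1)\mapsto(1-\rho_1)^2$ coming from the variance formula \eqref{EYt}, which is the sole substantive difference from Proposition \ref{means}.
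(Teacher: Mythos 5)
Your proposal is correct and follows essentially the same route as the paper's proof: rewrite \eqref{V1} via \eqref{EYt}--\eqref{EZt} as $A(t-\rho_2 nT)\geq A(t)-\rho_1(2-\rho_1)A(nT)$, specialize at $t=nT$ for point 1, use the monotonicity of $t\mapsto A(t-\rho_2 nT)-A(t)$ under concavity (the paper's inequality \eqref{eq1}) for point 2, and $A((1-\rho_2)nT)\geq(1-\rho_2)A(nT)$ for point 3. The only cosmetic difference is your derivative-based justification of the monotonicity step, which implicitly assumes differentiability, whereas the concavity argument (decreasing increments over intervals of fixed length) gives it directly; this is a presentational detail, not a gap.
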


\begin{proof}
Based on $\left( \ref{EYt}\right) $ and $\left( \ref{EZt}\right) $,
inequality $\left( \ref{V1}\right) $ is equivalent to 
\begin{equation}
A(t-\rho _{2}nT)\geq A(t)-\rho _{1}\left( 2-\rho _{1}\right) A(nT)
\label{V3}
\end{equation}%
for all $T>0$, all $n\in \mathbb{N}$, all $nT\leq t<\left( n+1\right) T$.
Considering $t=nT$, it implies $\left( \ref{V2}\right) $, so that point one is true.

Now assume $A(\cdot )$ to be concave. Based on $\left( \ref{eq1}\right) $, we
have:%
\begin{eqnarray*}
A(t-\rho _{2}nT)-A(t)+\rho _{1}(2-\rho _{1})A(nT) 
\geq A((1-\rho _{2})nT)-\left( 1-\rho _{1}\right) ^{2}A(nT).
\end{eqnarray*}

If $\left( \ref{V2}\right) $ is true, then $\left( \ref{V3}\right) $ is
consequently true and $\left( \ref{V1}\right) $ too. This shows the second point.

In the specific case where $A(\cdot )$ is concave and $(1-\rho _{2})\geq
\left( 1-\rho _{1}\right) ^{2}$, we have 
\begin{equation*}
A((1-\rho _{2})nT)\geq (1-\rho _{2})A(nT)\geq (1-\rho _{1})^{2}A(nT)
\end{equation*}%
(based on $\left( \ref{eq2}\right) $ for the first inequality). Hence $%
\left( \ref{V3}\right) $ is true, so that $\left( \ref{V1}\right) $ is true
too. This provides point three.

The reasoning is similar for reversed inequalities and it is omitted.
\end{proof}

\begin{example}
We now consider the same data as for Example \ref{Ex Means}, where neither condition $\left( \ref{V2}\right) $ nor the reversed inequality is true on the whole real line. The corresponding variances are plotted  in Figures \ref{Fig Var1} (first case) and \ref{Fig Var2} (second case). We can see that the respective variances of $Y_{t}^{(1)}$ and $Z_{t}^{(2)}$ are not ordered in the same way on the whole real line.

		\begin{figure}[tbp]
			\begin{subfigure}{0.5\textwidth}
				\includegraphics[width=0.78\textwidth,angle =-90]{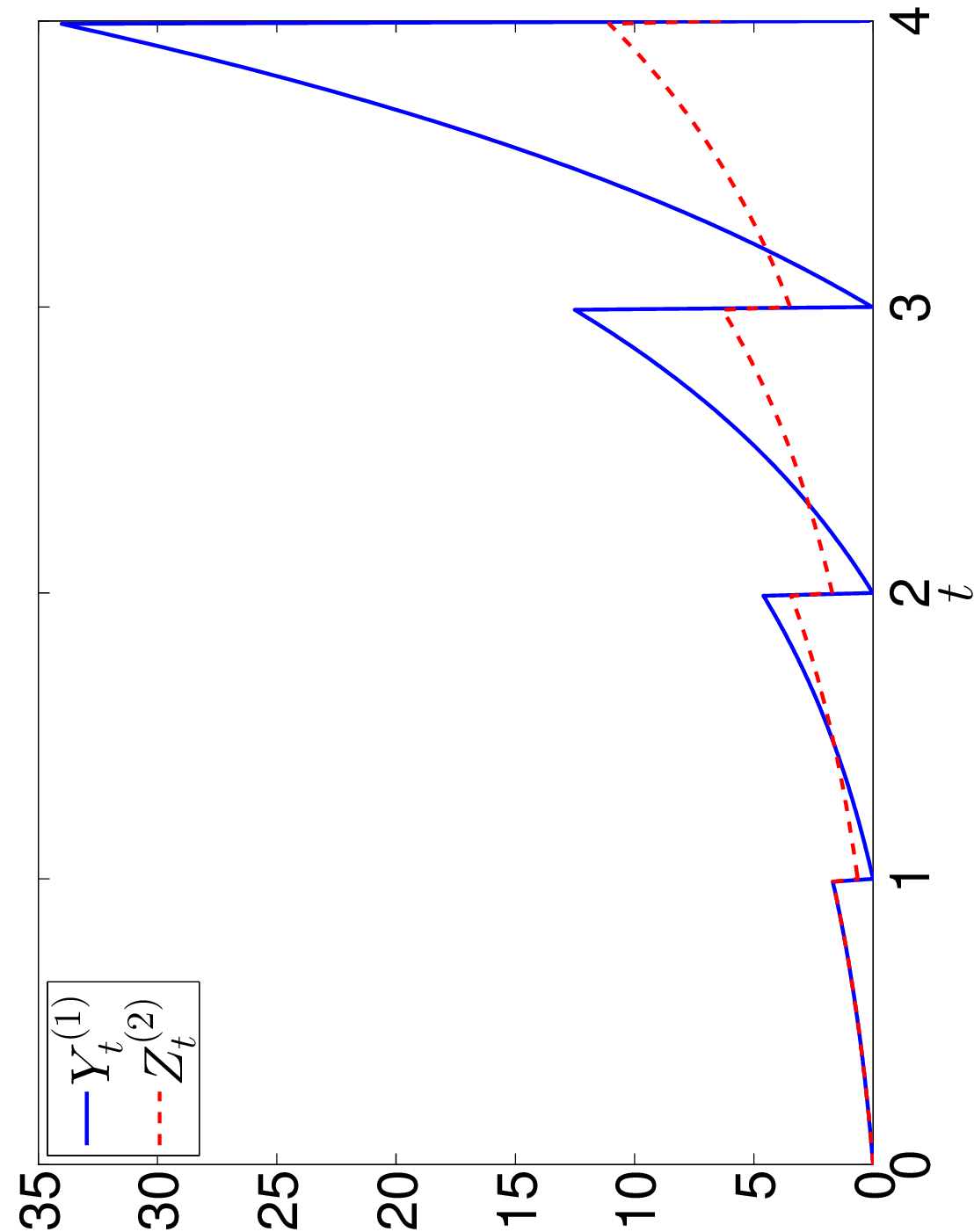}
				\caption{$A\left(  t\right)  =e^{t}-1$, $\rho_{1}=0.95$, $\rho_{2}=0.5$} \label{Fig Var1}
			\end{subfigure}%
			\begin{subfigure} {0.5\textwidth}
				\includegraphics[width=0.78\textwidth,angle =-90]{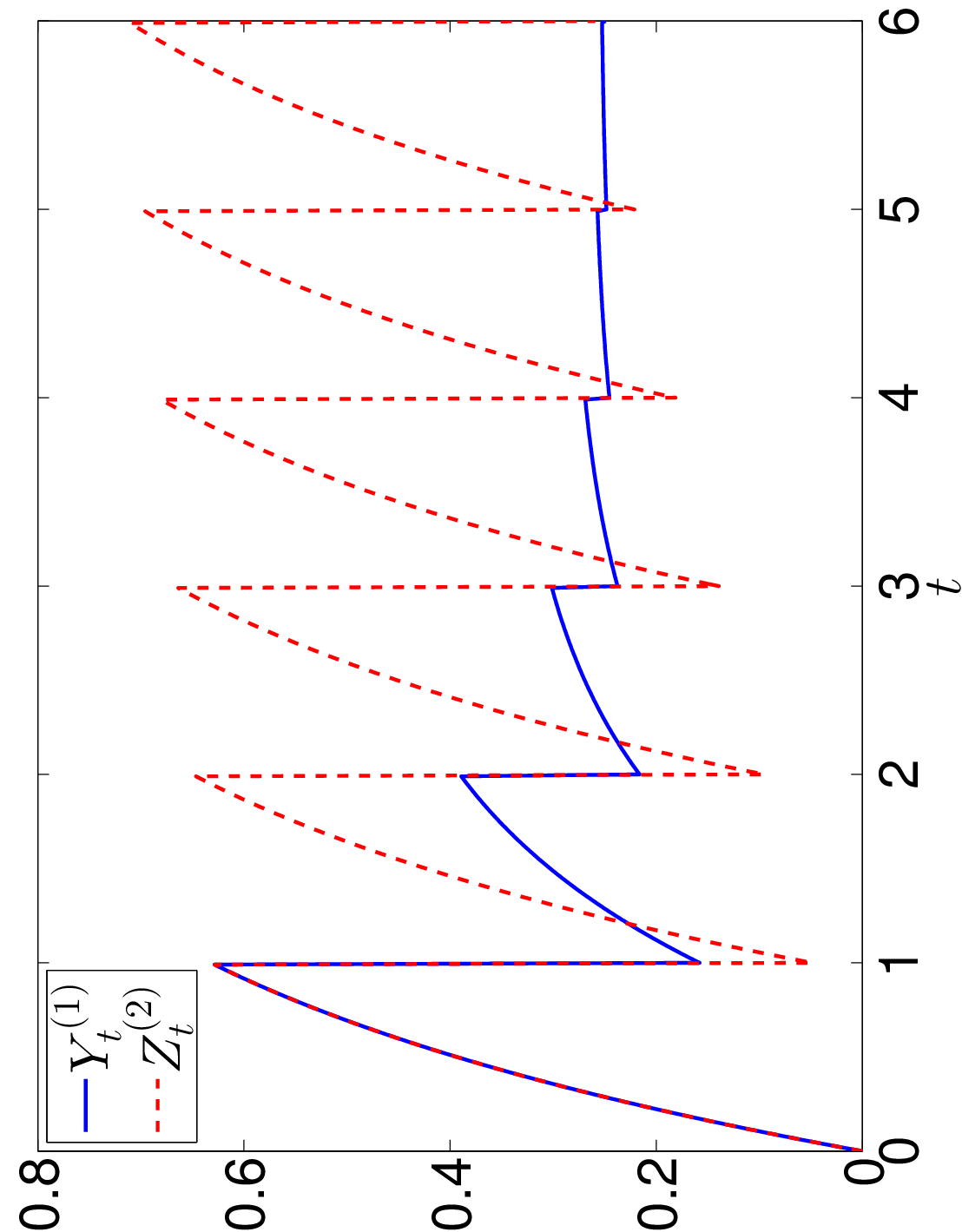}
				\caption{$A\left(  t\right)  =1-e^{-t}$,  $\rho_{1}=0.5$, $\rho_{2}=0.95$} \label{Fig Var2}
			\end{subfigure}
			\caption{Variances of $Y_{t}^{(1)}$ and $Z_{t}^{(2)}$ with $b=1$, $T=1$}
		\end{figure} 
\end{example}

Finally, we easily derive from Propositions \ref{means} and \ref{var} the following
corollary, where the expectation and variance of the two processes are
compared assuming a power law shape function for the gamma process.

\begin{corollary}
\label{corollaryweibullexpect}We consider $A(t)=\alpha t^{\beta }$ with $%
\alpha ,\beta >0$. Then we get

\begin{enumerate}
\item If $\beta \leq (\geq )1$, then 
$$\mathbb{E}\left( Z_{t}^{(2)}\right)
\geq \left( \leq \right) \mathbb{E}\left( Y_{t}^{(1)}\right) \forall
t\Leftrightarrow (1-\rho _{2})^{\beta }\geq \nolinebreak \left( \leq
\right) \left(1\nolinebreak -\nolinebreak \rho _{1}\right).$$
\item If $\beta \leq (\geq )1$, then  $$Var\left( Z_{t}^{(2)}\right) \geq
(\leq )Var\left( Y_{t}^{(1)}\right) \,\forall t  
\Leftrightarrow (1-\rho _{2})^{\beta }\geq \nolinebreak \left( \leq \right)
\left( 1-\rho _{1}\right) ^{2}.$$
\end{enumerate}
\end{corollary}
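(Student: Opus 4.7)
The plan is to apply Propositions \ref{means} and \ref{var} directly, after specializing condition (\ref{eqA1}) (resp.\ (\ref{V2})) to the power-law shape function $A(t)=\alpha t^\beta$. First I would observe that for this choice, $A$ is concave precisely when $\beta\le 1$ and convex precisely when $\beta\ge 1$, which lines up with the dichotomy in the statement. Then I would simplify the functional inequality (\ref{eqA1}):
\begin{equation*}
\alpha\bigl((1-\rho_{2})t\bigr)^{\beta}\ge (1-\rho_{1})\,\alpha t^{\beta}\quad\text{for all }t>0
\end{equation*}
cancels cleanly to $(1-\rho_{2})^{\beta}\ge (1-\rho_{1})$, independently of $t$; the analogous reduction turns (\ref{V2}) into $(1-\rho_{2})^{\beta}\ge (1-\rho_{1})^{2}$.

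For the case $\beta\le 1$, $A$ is concave, so Proposition \ref{means} gives both implications (\ref{eq4})$\Rightarrow$(\ref{eqA1}) (point 1) and (\ref{eqA1})$\Rightarrow$(\ref{eq4}) (point 2). Combining them with the equivalence just derived yields the equivalence $\mathbb{E}(Z_{t}^{(2)})\ge \mathbb{E}(Y_{t}^{(1)})\;\forall t\iff (1-\rho_{2})^{\beta}\ge (1-\rho_{1})$. Likewise, Proposition \ref{var} applied to the concave case yields the variance statement with $(1-\rho_{2})^{\beta}\ge (1-\rho_{1})^{2}$.

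For the case $\beta\ge 1$, $A$ is convex, and I would invoke the ``reversed inequalities, concave substituted by convex'' version of the same two propositions. The identical chain of implications then gives the reversed equivalences claimed in the corollary. Putting both regimes together produces the two bullet points.

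I do not anticipate a real obstacle here: the argument is essentially a substitution and a bookkeeping exercise on which direction of the inequality to use when $\beta$ crosses $1$. The only point requiring a bit of care is making sure, when asserting $(\ref{eqA1})\Leftrightarrow(1-\rho_{2})^{\beta}\ge(1-\rho_{1})$, that the cancellation of $\alpha t^{\beta}>0$ is valid for all $t>0$ simultaneously — which it is, because the $t$-dependence drops out entirely. Once that is noted, the corollary is a one-line consequence of each of Propositions \ref{means} and \ref{var}.
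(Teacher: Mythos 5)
Your proposal is correct and follows exactly the route the paper intends: the paper gives no separate argument, simply stating that the corollary is ``easily derived'' from Propositions \ref{means} and \ref{var}, which is precisely your specialization of conditions (\ref{eqA1}) and (\ref{V2}) to $A(t)=\alpha t^{\beta}$ after cancelling $\alpha t^{\beta}>0$, combined with the concave ($\beta\leq 1$) and reversed-inequality convex ($\beta\geq 1$) cases of those propositions.
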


\section{Stochastic comparison between $Y_{t}^{(1)}$ and $Z_{t}^{(2)}$}

We now come to the stochastic comparison between $Y_t^{(1)}$ and $Z_t^{(2)}$, as given {\color{black} by (\ref{Yt}) and (\ref{Zt}), with $\rho$ substituted by $\rho_i,i=1,2$, respectively}.
\begin{proposition}
\label{Prop Icx Icv}If%
\begin{equation}
A\left( (1-\rho _{2})nT\right) \geq \left( \leq \right) (1-\rho _{1})A(nT),
\label{eqA2}
\end{equation}%
then $Y_{nT}^{(1)}\prec _{icx}\left( \succ _{icv}\right) Z_{nT}^{(2)}$.
\end{proposition}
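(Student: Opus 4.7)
The plan is to reduce the statement directly to Lemma \ref{Lem gam}, since by (\ref{YT}) and (\ref{ZT}) both $Y_{nT}^{(1)}$ and $Z_{nT}^{(2)}$ are gamma distributed. Specifically, I will use the identifications
\[
Y_{nT}^{(1)}\sim\Gamma\!\left(A(nT),\,\tfrac{b}{1-\rho_1}\right),\qquad Z_{nT}^{(2)}\sim\Gamma\!\left(A((1-\rho_2)nT),\,b\right),
\]
and then simply verify the parameter inequalities required by points 2 and 3 of Lemma \ref{Lem gam}.

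For the \emph{icx} direction, assume (\ref{eqA2}) with $\geq$. Taking $(a_1,b_1)=(A(nT),b/(1-\rho_1))$ for $Y_{nT}^{(1)}$ and $(a_2,b_2)=(A((1-\rho_2)nT),b)$ for $Z_{nT}^{(2)}$, the condition $a_1\geq a_2$ follows from the fact that $A(\cdot)$ is non-decreasing and $(1-\rho_2)nT\leq nT$, while the condition $a_1/b_1\leq a_2/b_2$ becomes $(1-\rho_1)A(nT)\leq A((1-\rho_2)nT)$, which is exactly the assumption. Point 2 of Lemma \ref{Lem gam} then yields $Y_{nT}^{(1)}\prec_{icx}Z_{nT}^{(2)}$.

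For the \emph{icv} direction, assume the reversed inequality in (\ref{eqA2}). I aim to show $Z_{nT}^{(2)}\prec_{icv}Y_{nT}^{(1)}$, so I now take $(a_1,b_1)=(A((1-\rho_2)nT),b)$ and $(a_2,b_2)=(A(nT),b/(1-\rho_1))$ and check the three hypotheses of point 3 of Lemma \ref{Lem gam}: $a_1\leq a_2$ follows from the monotonicity of $A(\cdot)$; $b_1\leq b_2$ is $b\leq b/(1-\rho_1)$, which holds since $\rho_1\in(0,1)$; and $a_1/b_1\leq a_2/b_2$ reads $A((1-\rho_2)nT)\leq (1-\rho_1)A(nT)$, which is exactly the reversed assumption. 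This gives $Z_{nT}^{(2)}\prec_{icv}Y_{nT}^{(1)}$, i.e. $Y_{nT}^{(1)}\succ_{icv}Z_{nT}^{(2)}$.

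There is no real obstacle here: the whole proposition is essentially a bookkeeping exercise translating hypothesis (\ref{eqA2}) into the shape/scale comparisons of Lemma \ref{Lem gam}. The only subtlety worth flagging in the write-up is the reason why the two directions use two different points of the lemma (point 2 for \emph{icx}, point 3 for \emph{icv}), which stems from the fact that the shape parameters are ordered in opposite senses to the means under the two hypotheses, so one cannot appeal to a single likelihood-ratio comparison.
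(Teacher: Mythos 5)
Your proof is correct and follows exactly the paper's own route: the paper likewise identifies $Y_{nT}^{(1)}\sim\Gamma\left(A(nT),\frac{b}{1-\rho_1}\right)$ and $Z_{nT}^{(2)}\sim\Gamma\left(A((1-\rho_2)nT),b\right)$ and invokes Lemma \ref{Lem gam}; you have merely spelled out the parameter checks that the paper leaves implicit.
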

\begin{proof}
	{\black From (\ref{YT}) and (\ref{ZT}), we know that $Y_{nT}^{(1)}$ and $Z_{nT}^{(2)}$ are gamma distributed, with distributions $\Gamma \left( A\left(
		nT\right) ,\frac{b}{1-\rho_1}\right)$ and $\Gamma (A\left( (1-\rho_2)nT \right),b)$, respectively. The results can hence be obtained by application of Lemma \ref{Lem gam}.}
\end{proof}
We next focus on the comparison {\black between} $Y_{t}^{(1)}-Y_{nT}^{(1)}$ and $Z_{t}^{(2)}-Z_{nT}^{(2)}$.
\begin{proposition}
\label{Prop_comp_incr}If $A\left( t\right) $ is convex (concave), then for
all $nT\leq t<\left( n+1\right) T$:

\begin{itemize}
\item $Z_{t}^{(2)}-Z_{nT}^{(2)}\prec _{lr}\left( \succ _{lr}\right)
Y_{t}^{(1)}-Y_{nT}^{(1)},$

\item $Y_{t}^{(1)}-Y_{nT}^{(1)}\prec _{lc}\left( \succ _{lc}\right)
Z_{t}^{(2)}-Z_{nT}^{(2)}.$
\end{itemize}
\end{proposition}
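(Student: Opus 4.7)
The plan is to reduce both comparisons to a single sign condition on shape parameters, exploiting the fact that between two consecutive repairs both the ARD1 and the ARA1 dynamics are driven by the unaltered gamma process $X^{(n+1)}$, so the two increments turn out to be Gamma distributed with the \emph{same} scale parameter $b$.

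First I would read off the two distributions. From (\ref{Yt}), $Y_t^{(1)} - Y_{nT}^{(1)} = X_t^{(n+1)} - X_{nT}^{(n+1)}$ is $\Gamma(a_Y, b)$ with $a_Y := A(t) - A(nT)$ (note that this increment does not actually depend on $\rho_1$). From (\ref{Zt}), $Z_t^{(2)} - Z_{nT}^{(2)} = X_{t - \rho_2 nT}^{(n+1)} - X_{(1-\rho_2)nT}^{(n+1)}$ is $\Gamma(a_Z, b)$ with $a_Z := A(t - \rho_2 nT) - A((1-\rho_2)nT)$. Introducing $s := t - nT \in [0, T)$ and $g(u) := A(u+s) - A(u)$, one has $a_Y = g(nT)$ and $a_Z = g((1-\rho_2)nT)$.

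The key step is then the elementary observation that convexity of $A$ is equivalent to $g$ being non-decreasing, while concavity is equivalent to $g$ being non-increasing. Since $(1-\rho_2) nT \leq nT$, the convex case gives $a_Z \leq a_Y$ and the concave case gives $a_Z \geq a_Y$. The likelihood ratio statement then follows from Lemma \ref{Lem gam} point 1 applied with common scale $b$.

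For the log-concave comparison, I would work directly with the density ratio. Because the scales coincide, the exponential factors cancel and $f_U(x)/f_V(x) \propto x^{a_Y - a_Z}$ on $(0, \infty)$, where $U$ and $V$ denote the ARD1 and ARA1 increments respectively. The second derivative of $\log(f_U/f_V)$ with respect to $x$ equals $-(a_Y - a_Z)/x^2$, so $f_U/f_V$ is log-concave precisely when $a_Y \geq a_Z$; combined with the sign analysis above this yields $U \prec_{lc} V$ in the convex case and the reverse inequality in the concave case. No real obstacle is anticipated: everything collapses onto the monotonicity of $g$, which is an immediate consequence of the hypothesis on $A$.
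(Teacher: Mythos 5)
Your proposal is correct and follows essentially the same route as the paper: identify both increments as gamma distributed with common scale $b$ and shape parameters $A(t)-A(nT)$ and $A(t-\rho_2 nT)-A((1-\rho_2)nT)$, order the shapes via convexity/concavity of $A$, then invoke Lemma \ref{Lem gam} for the likelihood ratio order and inspect $\log$ of the density ratio for the log-concave order. Your reformulation through the increment function $g(u)=A(u+s)-A(u)$ and the use of the second derivative of $\log(f_U/f_V)$ are only cosmetic variants of the paper's direct inequality and first-derivative argument.
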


\begin{proof}
	{\black 
	For $nt\leq t<\left( n+1\right) T$, let us set   $$Y=Y_{t}^{(1)}-Y_{nT}^{(1)}=X_{t}^{\left( n+1\right)
	}-X_{nT}^{\left( n+1\right) }$$ (see (\ref{Yt})) and $$Z=Z_{t}^{(2)}-Z_{nT}^{(2)}=X_{t-\rho_{2}nT}^{\left( n+1\right) }-X_{\left( 1-\rho _{2}\right) nT}^{\left(
	n+1\right) }$$
(see (\ref{Zt})).
Then $Y$ and $Z$ are gamma distributed, with distributions $\Gamma(A(t)-A(nT))$ and $\Gamma(A\left( t-\rho _{2}nT\right) -A\left( \left( 1-\rho _{2}\right) nT\right))$, respectively. Now, the likelihood ratio comparison result is a direct consequence of Lemma \ref{Lem gam}, because the convexity (concavity) of $A(\cdot)$ entails that 
$$A(t)-A(nT) \geq (\leq) A\left( t-\rho _{2}nT\right)-A\left( \left( 1-\rho _{2}\right) nT\right). $$}

As for the log-concave order, using the notations of {\black Definition \ref{Def Var Order}}, we have%
\begin{equation*}
\left( \log \left( \frac{f_{Y}}{f_{Z}}\right) \right) ^{\prime }\left(
y\right) =\frac{\left[ A\left( t\right) -A\left( nT\right) \right] -\left[
A\left( t-\rho _{2}nT\right) -A\left( \left( 1-\rho _{2}\right) nT\right) %
\right] }{y}.
\end{equation*}%
This function decreases (increases) when $A\left( t\right) $ is convex
(concave), which provides the result.
\end{proof}

%The results of Proposition \ref{Prop_comp_incr} are illustrated in Figures \ref{likelihoodratio} and \ref{logcancava} at $t=10.5$ using $\rho_1=\rho_2=0.5$, $T=1$, $A(t)=t^{\alpha}$, $n=10$, $b=1$ and $\alpha=0.75$ (in the concave case) and $\alpha=1.25$ (in the convex case).
%
%
% 
%\begin{figure}[tbp]
%\begin{subfigure}{0.5\textwidth}
%\includegraphics[width=0.9\textwidth]{likelihoodratio.eps}
%\caption{$f_{Y_t^{(1)}-Y_{nT}^{(1)}}/f_{Z_t^{(2)}-Z_{nT}^{(2)}}$} \label{likelihoodratio}
%\end{subfigure}%
%\begin{subfigure} {0.5\textwidth}
%\includegraphics[width=0.9\textwidth]{logconcava.eps}
%\caption{$log\left(f_{Y_t^{(1)}-Y_{nT}^{(1)}}/f_{Z_t^{(2)}-Z_{nT}^{(2)}}\right)$} \label{logcancava}
%\end{subfigure}
%\caption{$f_{Y_t^{(1)}-Y_{nT}^{(1)}}/f_{Z_t^{(2)}-Z_{nT}^{(2)}}$ and $log\left(f_{Y_t^{(1)}-Y_{nT}^{(1)}}/f_{Z_t^{(2)}-Z_{nT}^{(2)}}\right)$ respectively}
%\end{figure}

\medskip

Hence, if $A\left( t\right) $ is convex (concave), the increment between
times $nT$ and $t$ (with $nT\leq t<\left( n+1\right) T$) is smaller (larger)
for the ARA1 model than for the ARD1 model in the sense of the likelihood
ratio order (and consequently also for the usual stochastic and increasing
convex/concave orders), but it has a larger (smaller) variability for the
ARA1 model than for the ARD1 model in the sense of the log-concave order. 

Based on the previous results, if $A\left(  \cdot\right)  $ is concave, we have
$Z^{(2)}_{t}-Z^{(2)}_{nT}\prec_{lc}Y_{t}^{(1)}-Y_{nT}^{(1)}$ but $Y^{(1)}_{nT}\prec_{lc}Z^{(2)}_{nT}$ (for 
$nT\leq t<\left(n+1\right)T$). There consequently is no real hope that $Y^{(1)}_{t}\prec_{lc}Z^{(2)}_{t}$. When $A\left(\cdot\right)$ is convex, we have both
$Y^{(1)}_{t}-Y^{(1)}_{nT}\prec_{lc}Z^{(2)}_{t}-Z^{(2)}_{nT}$ and $Y^{(1)}_{nT}\prec_{lc}Z^{(2)}_{nT}$, and
$Y^{(1)}_{t}\prec_{lc}Z^{(2)}_{t}$ might be valid. However, remembering that the
log-concave order is not closed under convolution, see \cite{Whitt1985}, the
question deserves to be further studied, {\black which is done in the following example}. 
\begin{example} 
		The function $\log\left(f_{Y^{(1)}_{t}}/f_{Z^{(2)}_{t}}\right)$ is plotted in Figures \ref{logdensityconcave} and \ref{logdensityconvex} for $\rho_1=0.5$, $\rho_2=0.4$, $T=1$, $A\left(t\right)=t^{\alpha}$, $n=10$, $b=1$ at time $t=10.2$, with $\alpha=0.75$ and
$\alpha=1.25$, respectively. We observe that we do not have $Y^{(1)}_{t}\prec_{lc}Z^{(2)}_{t}$ neither
for $\alpha=0.75$ (as expected) nor for $\alpha=1.25$ for which the question
was  open.  As a conclusion, in a general setting, $Y^{(1)}_{t}$ and $Z^{(2)}_{t}$ are not comparable with respect to the log-concave order. 
\begin{figure}[tbp]
\begin{subfigure}{0.5\textwidth}
\includegraphics[width=0.9\textwidth]{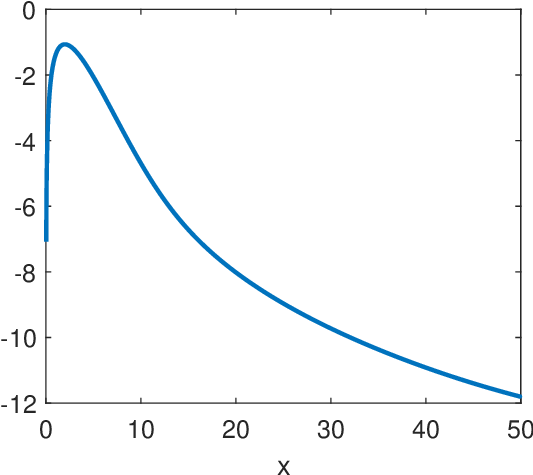}
\caption{$log(f_{Y_t^{(1)}}/f_{Z_t^{(2)}})$ for $\alpha=0.75$} \label{logdensityconcave}
\end{subfigure}%
\begin{subfigure} {0.5\textwidth}
\includegraphics[width=0.9\textwidth]{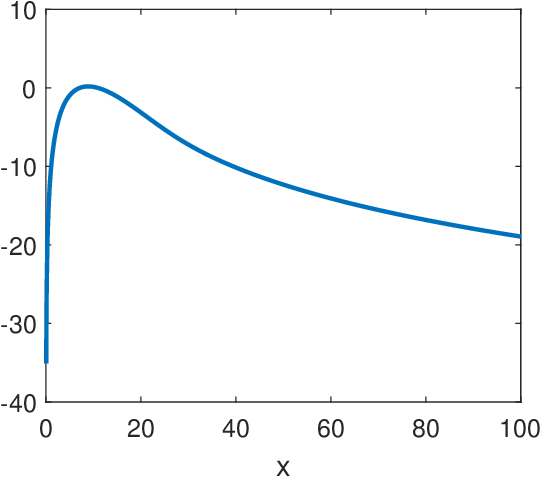}
\caption{$log(f_{Y_t^{(1)}}/f_{Z_t^{(2)}})$ for $\alpha=1.25$} \label{logdensityconvex}
\end{subfigure}
\caption{Plots of $log(f_{Y_t^{(1)}}/f_{Z_t^{(2)}})$ for $A(t)=t^{\alpha}$}
\end{figure}
\end{example}
Next result provides some conditions under which $Y_{t}^{(1)}$ and $%
Z_{t}^{(2)}$ are comparable with respect to either the increasing convex or
concave order.

\begin{theorem}
\label{Thm icx icv}If $A\left( \cdot \right) $ is concave and 
\begin{equation}
A\left( (1-\rho _{2}) t\right) \geq (1-\rho _{1})A(t)\text{ for all }t>0 \label{eqA3}
\end{equation}%
(condition of Proposition \ref{means}, known to be true if $\rho _{1}\geq
\rho _{2}$), then $Y_{t}^{(1)}\prec _{icx}Z_{t}^{(2)}$ for all {\black $t\geq 0$ (and all $T\geq 0$)}. 

If $A\left( t\right) $ is convex with a reversed inequality in $\left( \ref%
{eqA3}\right) $, then $Z_{t}^{(2)}\prec _{icv}Y_{t}^{(1)}$ for all {\black $t\geq 0$ (and all $T\geq 0$)}.
\end{theorem}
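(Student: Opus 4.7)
The plan is to decompose both $Y_{t}^{(1)}$ and $Z_{t}^{(2)}$ into their values at the last repair time $nT$ plus the (independent) increment accrued on $[nT,t]$, and then conclude by combining the two comparisons established in Propositions \ref{Prop Icx Icv} and \ref{Prop_comp_incr} via the closure of the $\prec_{icx}$ and $\prec_{icv}$ orders under convolution of independent random variables (as already invoked in the proof of Proposition \ref{Prop Y rho}).

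More precisely, fix $t$ with $nT \leq t < (n+1)T$. From (\ref{Yt}) and (\ref{Zt}) I would write
\begin{equation*}
Y_{t}^{(1)} = Y_{nT}^{(1)} + \bigl(X_{t}^{(n+1)} - X_{nT}^{(n+1)}\bigr), \qquad Z_{t}^{(2)} = Z_{nT}^{(2)} + \bigl(X_{t-\rho_{2}nT}^{(n+1)} - X_{(1-\rho_{2})nT}^{(n+1)}\bigr),
\end{equation*}
and then note that, in each expression, the two summands are independent, since $Y_{nT}^{(1)}$ and $Z_{nT}^{(2)}$ only depend on $X^{(1)},\dots,X^{(n)}$, while the bracketed increments depend only on $X^{(n+1)}$.

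For the first part, assume $A(\cdot)$ concave together with (\ref{eqA3}). Taking $t=nT$ in (\ref{eqA3}) gives exactly hypothesis (\ref{eqA2}) of Proposition \ref{Prop Icx Icv}, which yields $Y_{nT}^{(1)} \prec_{icx} Z_{nT}^{(2)}$. Next, Proposition \ref{Prop_comp_incr} applied in the concave case provides $Y_{t}^{(1)} - Y_{nT}^{(1)} \prec_{lr} Z_{t}^{(2)} - Z_{nT}^{(2)}$, and hence $Y_{t}^{(1)} - Y_{nT}^{(1)} \prec_{icx} Z_{t}^{(2)} - Z_{nT}^{(2)}$ since the likelihood ratio order implies the increasing convex order. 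Closure of $\prec_{icx}$ under convolution of independent random variables (Thm 4.A.8(d) in Shaked--Shanthikumar) then yields $Y_{t}^{(1)} \prec_{icx} Z_{t}^{(2)}$, which is the claimed conclusion.

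The convex case with the reversed inequality is strictly symmetric, using $\prec_{icv}$ everywhere: Proposition \ref{Prop Icx Icv} (reversed) gives $Z_{nT}^{(2)} \prec_{icv} Y_{nT}^{(1)}$; Proposition \ref{Prop_comp_incr} in the convex case gives $Z_{t}^{(2)} - Z_{nT}^{(2)} \prec_{lr} Y_{t}^{(1)} - Y_{nT}^{(1)}$, hence $\prec_{icv}$; and closure of $\prec_{icv}$ under independent convolution concludes. The only non-routine step I foresee is the bookkeeping around the independence of the two summands in each decomposition, which must be checked carefully so that the convolution closure result applies. Beyond that, every ingredient is already in place from the preceding propositions, and no new gamma computation is required.
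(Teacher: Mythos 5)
Your proof is correct and follows essentially the same route as the paper: the decomposition $Y_{t}^{(1)}=Y_{nT}^{(1)}+\bigl(Y_{t}^{(1)}-Y_{nT}^{(1)}\bigr)$ (and likewise for $Z_{t}^{(2)}$), Propositions \ref{Prop Icx Icv} and \ref{Prop_comp_incr}, and the stability of the increasing convex/concave orders under convolution of independent summands. Your added care about independence of the two summands and the reduction of (\ref{eqA3}) at $t=nT$ to (\ref{eqA2}) simply makes explicit what the paper leaves implicit.
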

\begin{proof}
Writing $Y_{t}^{\left( 1\right) }=\left( Y_{t}^{\left( 1\right)
}-Y_{nT}^{\left( 1\right) }\right) +Y_{nT}^{\left( 1\right) }$ (the same for 
$Z_{t}^{\left( 2\right) }$), the results are direct consequences from
Propositions \ref{Prop Icx Icv} and \ref{Prop_comp_incr}, based on the fact
that both increasing convex and concave orders are stable {\black under}
convolution \cite[Thm 4.A.8 (d) page 186]{shaked2007}.
\end{proof}

\begin{example}[\black \textbf{Increasing convex order}]  We consider $t=10.5$, $T=1$, $b=1$, {\black $\rho_1$ = $\rho_2$= $0.75$ with $A(t)=t^{0.7}$ as a first case (concave case with condition $(\ref{eqA3})$ fulfilled)} and $A(t)=t^{1.1}$ as a second case (convex case with reversed condition $(\ref{eqA3})$ fulfilled). The difference 
$$%
\int_{x}^{+\infty }\bar{F}_{Z_{t}^{(2)}}\left( u\right) ~du-\int_{x}^{+\infty }%
\bar{F}_{Y_{t}^{(1)}}\left( u\right) ~du$$ is plotted in Figures \ref{Fig10a} (first case)  and \ref{Fig10b} (second case). As expected, the difference remains positive in the first case, which means that {\color{black} $Y_{t}^{(1)}\prec_{icx}Z_{t}^{(2)}$} (see (\ref{icx})). We observe that it changes sign in the convex case, which shows that $Z_{t}^{(2)}$ and $Y_{t}^{(1)}$ are not comparable with respect to the increasing convex order.

\begin{figure}[tbp]
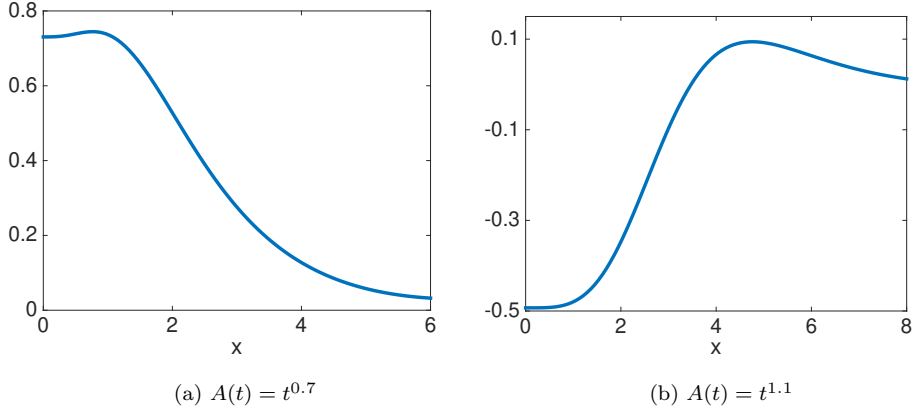

\begin{subfigure}{0.5\textwidth}
\includegraphics[width=0.9\textwidth]{Fig10a.eps}
\caption{$A(t)=t^{0.7}$} \label{Fig10a}
\end{subfigure}%
\begin{subfigure} {0.5\textwidth}
\includegraphics[width=0.9\textwidth]{Fig10b.eps}
\caption{$A(t)=t^{1.1}$} \label{Fig10b}
\end{subfigure}
\caption{Plots of $\int_{x}^{\infty} \bar{F}_{Z_t^{(2)}}(u)du-\int_{x}^{\infty}\bar{F}_{Y_t^{(1)}}(u) du$ for $A(t)=t^{\alpha}$}
\end{figure}
\end{example}
\begin{example}[\black \textbf{Increasing concave order}] We consider $t=10.5$, $T=1$, $\rho_1=0.8$, $\rho_2=0.78$, $b=1$ with $A(t)=t^{0.9}$ (concave case with condition $(\ref{eqA3})$ fulfilled) and $A(t)=t^{1.1}$ (convex case with reversed condition $(\ref{eqA3})$ true). The difference  $\int_{0}^{x} F_{Z_t^{(2)}}(u)du-\int_{0}^{x}F_{Y_t^{(1)}}(u) du$ is plotted in Figures \ref{Fig11a} (first case) and \ref{Fig11b} (second case). We observe that, as expected,  $Z_{t}^{(2)}\prec _{icv}Y_{t}^{(1)}$ (see (\ref{icv})) in the second case whereas $Z_{t}^{(2)}$ and $Y_{t}^{(1)}$ are not comparable with respect to the increasing concave order in the first case.
\begin{figure}[tbp]
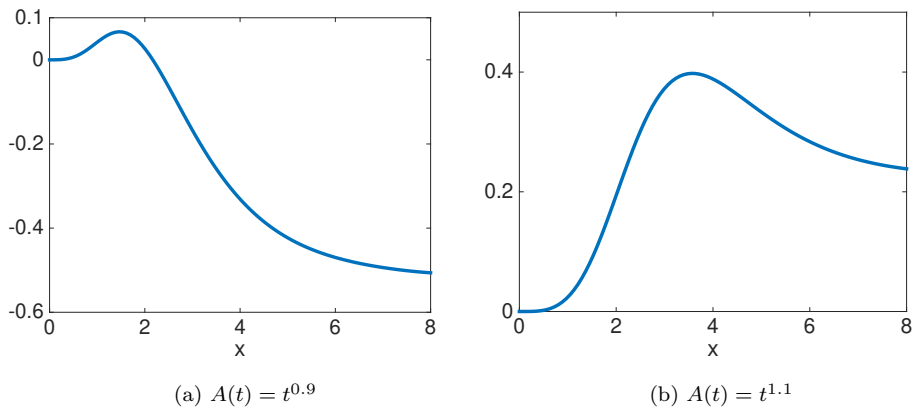

\begin{subfigure}{0.5\textwidth}
\includegraphics[width=0.9\textwidth]{figure11a.eps}
\caption{$A(t)=t^{0.9}$} \label{Fig11a}
\end{subfigure}%
\begin{subfigure} {0.5\textwidth}
\includegraphics[width=0.9\textwidth]{figure11b.eps}
\caption{$A(t)=t^{1.1}$} \label{Fig11b}
\end{subfigure}
\caption{Plots of $\int_{0}^{x} F_{Z_t^{(2)}}(u)du-\int_{0}^{x}F_{Y_t^{(1)}}(u) du$ for $A(t)=t^{\alpha}$}
\end{figure}

\end{example}

\medskip
Finally, we end this section by considering the case of a homogeneous gamma process ($A(t)=at$). 
	\begin{corollary}\label{Cor Hom}
		Assume that $A(t)=at$ for all $t \geq 0$, where $a>0$. We have the following results:
		\begin{enumerate}
			\item If $\rho_{1}\geq \rho_{2}$, then $Y_{t}^{(1)}\prec _{icx}Z_{t}^{(2)}$ and hence $\mathbb{E}(Y_{t}^{(1)})\leq \mathbb{E}(Z_{t}^{(2)})$ for all $t\geq 0$;
			\item If $\rho_{1}\leq \rho_{2}$, then $Z_{t}^{(2)}\prec _{icv}Y_{t}^{(1)}$ and hence $\mathbb{E}(Z_{t}^{(2)})\leq \mathbb{E}(Y_{t}^{(1)})$ for all $t\geq 0$;
			\item If $\rho_{1}=\rho_{2}$, then $Y_{t}^{(1)}\prec _{cx}Z_{t}^{(2)}$ and $Z_{t}^{(2)}\prec _{cv}Y_{t}^{(1)}$ and hence $\mathbb{E}(Z_{t}^{(2)})= \mathbb{E}(Y_{t}^{(1)})$ for all $t\geq 0$;
			\item $Var\left( Z_{t}^{(2)}\right) \geq Var\left( Y_{t}^{(1)}\right)$ for all $t>0$ if and only if $1-\rho_{2}\geq (1-\rho_{1})^{2}$.
		\end{enumerate} 
	\end{corollary}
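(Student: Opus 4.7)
The key observation is that $A(t)=at$ is linear, hence simultaneously concave and convex, so both branches of Theorem \ref{Thm icx icv} and Proposition \ref{var} become available. Moreover, for this choice of $A$, condition $(\ref{eqA3})$ reduces to $a(1-\rho_{2})t\geq (1-\rho_{1})at$, i.e.\ to $\rho_{1}\geq \rho_{2}$, and its reverse to $\rho_{1}\leq \rho_{2}$. Similarly, condition $(\ref{V2})$ reduces to $1-\rho_{2}\geq (1-\rho_{1})^{2}$.

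For points 1 and 2, the plan is to invoke Theorem \ref{Thm icx icv} directly. Under $\rho_{1}\geq \rho_{2}$, condition $(\ref{eqA3})$ holds and, regarding $A$ as concave, the theorem yields $Y_{t}^{(1)}\prec_{icx}Z_{t}^{(2)}$; the mean inequality then follows because the identity map is increasing convex. Symmetrically, under $\rho_{1}\leq \rho_{2}$, the reversed form of $(\ref{eqA3})$ holds and, regarding $A$ as convex, the theorem delivers $Z_{t}^{(2)}\prec_{icv}Y_{t}^{(1)}$, from which $\mathbb{E}(Z_{t}^{(2)})\leq \mathbb{E}(Y_{t}^{(1)})$ follows since the identity is also increasing concave.

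For point 3, when $\rho_{1}=\rho_{2}=:\rho$, both previous cases apply, so we simultaneously have $Y_{t}^{(1)}\prec_{icx}Z_{t}^{(2)}$ and $Z_{t}^{(2)}\prec_{icv}Y_{t}^{(1)}$. A direct computation from $(\ref{EYt})$ and $(\ref{EZt})$ gives $\mathbb{E}(Y_{t}^{(1)})=\frac{a(t-\rho nT)}{b}=\mathbb{E}(Z_{t}^{(2)})$ for $nT\leq t<(n+1)T$, so that the equality of means holds. Invoking the equivalence recalled in Section 2 (namely, $X\prec_{cx}Y\iff X\prec_{icx}Y$ and $\mathbb{E}(X)=\mathbb{E}(Y)$, and analogously for $\prec_{cv}$), we upgrade the two orderings to $Y_{t}^{(1)}\prec_{cx}Z_{t}^{(2)}$ and $Z_{t}^{(2)}\prec_{cv}Y_{t}^{(1)}$.

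Point 4 is then a routine application of Proposition \ref{var}: the implication $(\ref{V1})\Rightarrow (\ref{V2})$ is unconditional, while the converse uses the concavity of $A$, which holds here since $A$ is linear. Hence $Var(Z_{t}^{(2)})\geq Var(Y_{t}^{(1)})$ for all $t>0$ is equivalent to $(\ref{V2})$, which as noted above is just $1-\rho_{2}\geq (1-\rho_{1})^{2}$. The only point requiring any care is the equality-of-means computation in part 3; all other statements reduce to direct specialisations of the previously established results, and no genuine obstacle is anticipated.
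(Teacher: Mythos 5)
Your proposal is correct and follows essentially the same route as the paper: points 1, 2 and 4 are specialisations of Theorem \ref{Thm icx icv} and Proposition \ref{var} (the paper routes point 4 through Corollary \ref{corollaryweibullexpect} with $\beta=1$, which is the same computation), and point 3 is obtained exactly as in the paper by combining the two one-sided orderings with equality of means and the fact that $\prec_{icx}$ ($\prec_{icv}$) plus equal means gives $\prec_{cx}$ ($\prec_{cv}$), i.e.\ Theorem 4.A.35 of Shaked and Shanthikumar. No gaps.
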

\begin{proof}
	Points 1, 2 and 4 are direct consequences of Theorem \ref{Thm icx icv} and Corollary \ref{corollaryweibullexpect}. As for point 3, assume that $\rho_{1}=\rho_{2}$. This entails that  $\mathbb{E}%
	 \left(  Y_{t}^{(1)}\right)  =\mathbb{E}\left(  Z_{t}^{(2)}\right)  $. Based on the first two points, the result follows from \cite[Theorem 4.A.35 page 197]{shaked2007}.
\end{proof}
\section{Mostly equivalent imperfect repair models}
In an applied context, parameters $\rho _{1}$ and $\rho _{2}$ for ARD1 and
ARA1 models, respectively, will be estimated from feedback data, which will
be typically gathered at maintenance times $iT$, $i\geq 1$. As a
consequence, we can expect that the estimated parameters $\hat{\rho}_{1}$
and $\hat{\rho}_{2}$ should be such that the corresponding expected
deterioration levels should be very similar at maintenance times, namely
such that%
\begin{equation*}
\mathbb{E}\left( Y_{iT}^{\left( 1\right) }\right) =\frac{\left( 1-\hat{\rho}%
_{1}\right) A\left( iT\right) }{b}\simeq \mathbb{E}\left( Z_{iT}^{\left(
2\right) }\right) =\frac{A\left( \left( 1-\hat{\rho}_{2}\right) iT\right) }{b%
}.
\end{equation*}%
There hence is a specific interest for the applications to compare the ARD1
and ARA1 models under the condition 
\begin{equation}
\left( 1-\rho _{1}\right) A\left( iT\right) =A\left( \left( 1-\rho
_{2}\right) iT\right) \text{ for }i\geq 1,  \label{req}
\end{equation}%
on $(\rho_{1},\rho_{2})$, which will lead to mostly equivalent deterioration levels (at least at
maintenance times). However, the previous requirement $\left( \ref{req}%
\right) $ does not seem to have a solution for a general shape function $%
A\left( \cdot \right) $. We hence restrict the study to the power law case
$A\left( t\right) =\alpha t^{\beta }$ (with $\alpha ,\beta >0$), for
which $\left( \ref{req}\right) $ is just equivalent to 
\begin{equation}
1-\rho _{1}=\left( 1-\rho _{2}\right) ^{\beta }.  \label{Eq rho 1 rho2}
\end{equation}

This section is hence devoted to this specific power law case with the
previous relationship between $\rho _{1}$ and $\rho _{2}$, which ensures
that 
\begin{equation*}
\mathbb{E}\left( Y_{iT}^{\left( 1\right) }\right) =\mathbb{E}\left(
Z_{iT}^{\left( 2\right) }\right) \text{ for all }i\geq 1.
\end{equation*}

\begin{remark}
This specific ``equivalent'' case has a similar spirit to that detailed in 
\cite[Property 4]{doyen2004}, where the authors match the minimal wear
intensities of two imperfect repair models for recurrent events, based on
the reduction of either virtual age or failure intensity.
\end{remark}
In case of a homogeneous gamma process ($\beta=1$), the equivalent case corresponds to identical repair efficiencies for both ARD1 and ARA1 models ($\rho_1=\rho_2$), which has already been studied in Corollary \ref{Cor Hom} (point 3). We now investigate the case of a general $\beta$.

In the equivalent case, there is equality in $\left( \ref{eqA1}\right) $, $%
\left( \ref{eqA2}\right) $ and $\left( \ref{eqA3}\right) $. Based on the
fact that $A\left( \cdot \right) $ is concave (convex) when $\beta \leq
\left( \geq \right) 1$, we directly get the following results from
Proposition \ref{Prop Icx Icv}, Theorem \ref{Thm icx icv} and Corollary \ref%
{corollaryweibullexpect}.

\begin{corollary}
\label{CorEquivalent}Assume that $A\left( t\right) =\alpha t^{\beta }$ (with $%
\alpha ,\beta >0$) and that $(\rho _{1},\rho _{2})$ fulfills $\left( \ref%
{Eq rho 1 rho2}\right) $ (equivalent case). Then:

\begin{enumerate}
\item $Z_{nT}^{(2)}\prec _{cv}Y_{nT}^{(1)}$ and $Y_{nT}^{(1)}\prec
_{cx}Z_{nT}^{(2)}$ (which both entail that $var\left( Z_{nT}^{\left(
2\right) }\right) \geq var\left( Y_{nT}^{\left( 1\right) }\right) $, see 
\cite[(3.A.4) page 110]{shaked2007}) for all $n\geq 1$.

\item If $\beta \leq \left( \geq \right) 1$, then $Y_{t}^{(1)}\prec
_{icx}\left( \succ _{icv}\right) Z_{t}^{(2)}$ (which entails that $\mathbb{E}%
\left( Y_{t}^{\left( 1\right) }\right) \leq \left( \geq \right) \mathbb{E}%
\left( Z_{t}^{\left( 2\right) }\right) $) for all $t\geq 0$.

\item If $\beta \leq 1$, then $Var\left( Z_{t}^{(2)}\right) \geq Var\left(
Y_{t}^{(1)}\right) $ for all $t\geq 0$.
\end{enumerate}
\end{corollary}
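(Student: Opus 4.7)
The plan is to recognize that, under the power-law assumption $A(t)=\alpha t^{\beta}$ with the constraint $1-\rho_1=(1-\rho_2)^{\beta}$, all three inequalities $(\ref{eqA1})$, $(\ref{eqA2})$, $(\ref{eqA3})$ collapse to \emph{equalities} at times of the form $nT$, and $A(\cdot)$ is concave for $\beta\leq 1$ and convex for $\beta\geq 1$. Each of the three statements in the corollary should then follow as an immediate specialization of an already-established result (Proposition \ref{Prop Icx Icv}, Theorem \ref{Thm icx icv}, and Corollary \ref{corollaryweibullexpect}), together with the paper's reminder that $X\prec_{cx}Y$ is equivalent to $X\prec_{icx}Y$ plus $\mathbb{E}(X)=\mathbb{E}(Y)$ (and likewise for $\prec_{cv}$).

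For point 1, I would observe that the equivalent-case constraint forces $\mathbb{E}(Y_{nT}^{(1)})=\mathbb{E}(Z_{nT}^{(2)})$ by the definition built into $(\ref{Eq rho 1 rho2})$. Now applying Proposition \ref{Prop Icx Icv} twice — once with the ``$\geq$'' branch and once with the ``$\leq$'' branch, both of which hold since $(\ref{eqA2})$ is an equality — yields simultaneously $Y_{nT}^{(1)}\prec_{icx}Z_{nT}^{(2)}$ and $Z_{nT}^{(2)}\prec_{icv}Y_{nT}^{(1)}$. Combining each with the equality of means upgrades them to $\prec_{cx}$ and $\prec_{cv}$, respectively.

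For point 2, I would simply invoke Theorem \ref{Thm icx icv}: if $\beta\leq 1$, then $A$ is concave and $(\ref{eqA3})$ holds (as equality), giving $Y_t^{(1)}\prec_{icx}Z_t^{(2)}$; if $\beta\geq 1$, then $A$ is convex with the reverse equality, giving $Z_t^{(2)}\prec_{icv}Y_t^{(1)}$. No additional work is needed here; the inequalities on the means stated inside the parentheses follow because $\prec_{icx}$ (resp.\ $\prec_{icv}$) implies the expected ordering of means.

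For point 3, I would apply part 2 of Corollary \ref{corollaryweibullexpect}, which (for $\beta\leq 1$) reduces the desired variance inequality to checking $(1-\rho_2)^{\beta}\geq(1-\rho_1)^{2}$. Substituting the equivalent-case identity $1-\rho_1=(1-\rho_2)^{\beta}$, the right-hand side becomes $(1-\rho_2)^{2\beta}$, so the required inequality is $(1-\rho_2)^{\beta}\geq(1-\rho_2)^{2\beta}$, which is automatic since $0<1-\rho_2<1$ and $\beta\leq 2\beta$. The only mildly delicate point — and the closest thing to an obstacle — is this last verification: one must be careful that $1-\rho_2<1$ flips the exponent inequality the right way, so that the power-law equivalent-case parametrization self-consistently forces the variance of the ARA1 process to dominate that of the ARD1 process, not the other way round.
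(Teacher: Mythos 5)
Your proposal is correct and follows essentially the same route as the paper: the paper likewise observes that the equivalent-case constraint makes conditions $(\ref{eqA1})$, $(\ref{eqA2})$ and $(\ref{eqA3})$ hold with equality and then reads off the three points from Proposition \ref{Prop Icx Icv}, Theorem \ref{Thm icx icv} and Corollary \ref{corollaryweibullexpect} (your explicit upgrade to $\prec_{cx}$/$\prec_{cv}$ via equal means and the check $(1-\rho_2)^{\beta}\geq(1-\rho_2)^{2\beta}$ are exactly the details the paper leaves implicit).
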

{\black 
	\begin{remark}
	Based on the previous result, we can see that even if the two imperfect repair models provide similar expected deterioration levels at maintenance times, there are differences in both their location and spread between the repairs. For instance, considering $\beta \leq 1$, a possible by-product of $Y_{t}^{(1)}\prec_{icx} Z_{t}^{(2)}$ is that $\mathbb{E}((Y_{t}^{(1)}-h)^{+})\leq \mathbb{E}((Z_{t}^{(2)}-h)^{+})$ for any $h>0$, see, e.g., \cite[(4) p 182]{shaked2007}. If $h$ is a critical deterioration level in an application context, $(Y_{t}^{(1)}-h)^{+}$ and $(Z_{t}^{(2)}-h)^{+}$ correspond to the hazardous part of deterioration (beyond the critical level) and this means that the expected ``risk'' is lower for the ARD1 model than for the ARA1 one.
	\end{remark}}
	
Note that Corollary \ref{CorEquivalent} does not provide any insight for the comparison
of the variances at time $t$ when $\beta >1$, which
hence deserves a different analysis on which we now focus.

\begin{proposition}
\label{propositionvariance}Let $\beta >1$ and let 
\begin{equation}
g\left( x\right) =x^{\beta }-\left( x-\rho _{2}\right) ^{\beta }-\left(
1-(1-\rho _{2})^{2\beta }\right)  \label{g}
\end{equation}%
for $x\in \lbrack 1,2)$.

\begin{itemize}
\item If $g\left( 2^{-}\right) \leq 0$, then 
\begin{equation}
Var\left( Y_{t}^{(1)}\right) \leq Var\left( Z_{t}^{(2)}\right)
\label{Ineq VAR}
\end{equation}%
for all $t\geq 0$.

\item If $g\left( 2^{-}\right) >0$, there exists one single $x^{\ast }\in
(1,2)$ such that $g\left( x^{\ast }\right) =0$. Also:

\begin{itemize}
\item Inequality $\left( \ref{Ineq VAR}\right) $ is true for all $t\geq
t^{\ast }=\left\lceil \frac{1}{x^{\ast }-1}\right\rceil T$, where $%
\left\lceil \cdot \right\rceil $ stands for the ceiling function;

\item For each $n<\frac{1}{x^{\ast }-1}$, inequality $\left( \ref{Ineq VAR}%
\right) $ is true for all $t\in \lbrack nT,x^{\ast }nT)$, with a reversed
inequality for $t\in \lbrack x^{\ast }nT,\left( n+1\right) T)$.
\end{itemize}
\end{itemize}
\end{proposition}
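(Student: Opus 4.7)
The plan is to rewrite the inequality $\mathrm{Var}(Y_t^{(1)})\leq\mathrm{Var}(Z_t^{(2)})$ explicitly in the power-law case using the ``equivalent'' relation, reduce it to a single-variable inequality in $x=t/(nT)$, and then read the conclusions off the sign chart of $g$.

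First, I would plug $A(t)=\alpha t^{\beta}$ into \eqref{EYt} and \eqref{EZt}. Noting that $\rho_{1}(2-\rho_{1})=1-(1-\rho_{1})^{2}=1-(1-\rho_{2})^{2\beta}$ by \eqref{Eq rho 1 rho2}, the inequality $\mathrm{Var}(Y_t^{(1)})\leq\mathrm{Var}(Z_t^{(2)})$ for $n\geq 1$ and $t\in[nT,(n+1)T)$ becomes
$$t^{\beta}-(t-\rho_{2}nT)^{\beta}\leq\bigl(1-(1-\rho_{2})^{2\beta}\bigr)(nT)^{\beta}.$$
Dividing through by $(nT)^{\beta}$ and introducing $x=t/(nT)\in[1,1+1/n)\subseteq[1,2)$, this is exactly $g(x)\leq 0$ with $g$ as in \eqref{g}. (The case $n=0$, i.e.\ $t<T$, is trivial since $Y_t^{(1)}$ and $Z_t^{(2)}$ then have the same distribution.)

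Next I would analyse $g$ on $[1,2)$. Its derivative is $g'(x)=\beta\bigl[x^{\beta-1}-(x-\rho_{2})^{\beta-1}\bigr]$, which is strictly positive because $\beta>1$ and $x>x-\rho_{2}>0$ on $[1,2)$; hence $g$ is strictly increasing. A direct computation gives $g(1)=-(1-\rho_{2})^{\beta}\bigl[1-(1-\rho_{2})^{\beta}\bigr]<0$.

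The conclusions then split according to the sign of $g(2^{-})$. If $g(2^{-})\leq 0$, monotonicity gives $g\leq 0$ on the whole of $[1,2)$, so \eqref{Ineq VAR} holds for every admissible pair $(n,t)$. If $g(2^{-})>0$, the intermediate value theorem combined with strict monotonicity and continuity yields a unique $x^{\ast}\in(1,2)$ with $g(x^{\ast})=0$, with $g(x)\leq 0\iff x\leq x^{\ast}$. Converting back via $x=t/(nT)$: when $n<1/(x^{\ast}-1)$ we have $(n+1)/n>x^{\ast}$, so the interval $[nT,(n+1)T)$ straddles $x^{\ast}nT$, giving \eqref{Ineq VAR} for $t\in[nT,x^{\ast}nT)$ and the reverse for $t\in[x^{\ast}nT,(n+1)T)$; when $n\geq 1/(x^{\ast}-1)$, i.e.\ $n\geq\lceil 1/(x^{\ast}-1)\rceil$, we have $(n+1)/n\leq x^{\ast}$ so $x<x^{\ast}$ throughout, and \eqref{Ineq VAR} holds on the entire cell. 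Taking the union over such $n$ yields the threshold $t^{\ast}=\lceil 1/(x^{\ast}-1)\rceil T$.

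The routine part is the algebraic reduction and the monotonicity of $g$; the one place that demands care is the bookkeeping between the continuous variable $x^{\ast}$ and the discrete cell index $n$, in particular arguing correctly with the ceiling function to identify $t^{\ast}$ and to pin down the switching point $x^{\ast}nT$ within each sub-threshold cell.
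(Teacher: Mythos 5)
Your proof is correct and follows essentially the same route as the paper: reduce the variance inequality to $g(t/(nT))\leq 0$, use the strict monotonicity of $g$ for $\beta>1$, and split according to the sign of $g(2^{-})$, with the cell/threshold bookkeeping handled as in the paper. The only (harmless) difference is that you verify $g(1)<0$ by direct computation, whereas the paper deduces $g(1)\leq 0$ from point 1 of Corollary \ref{CorEquivalent}.
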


\begin{proof}
For $nT\leq t<(n+1)T$, it is easy to check that $\left( \ref{Ineq VAR}%
\right) $ is equivalent to%
\begin{equation*}
t^{\beta }-\left( 1-(1-\rho _{2})^{2\beta }\right) (nT)^{\beta }\leq \left(
t-\rho _{2}nT\right) ^{\beta },
\end{equation*}%
which can also be written as $g\left( \frac{t}{nT}\right) \leq 0$ where $1 \leq \frac{t}{nT} \leq 1+\frac{1}{n}$ and where $g$
is defined by $\left( \ref{g}\right) $ for $x\in \bigcup_{n\geq1}[1,1+\frac{1}{n})=[1,2)$.

As $\beta >1$, the function $g$ increases from $g\left( 1\right) $ to $%
g\left( 2^{-}\right) $ and based on the fact that $var\left( Z_{nT}^{\left(
2\right) }\right) \geq var\left( Y_{nT}^{\left( 1\right) }\right) $ (first
point of Corollary \ref{CorEquivalent}), we have $g\left(
1\right) \leq 0$. As for the sign of 
\begin{equation*}
g\left( 2^{-}\right) =2^{\beta }-\left( 2-\rho _{2}\right) ^{\beta }-\left(
1-(1-\rho _{2})^{2\beta }\right) ,
\end{equation*}%
there are two possibilities, which lead to the following cases:

\begin{itemize}
\item If $g\left( 2^{-}\right) \leq 0$, then $g\left( x\right) \leq 0$ for
all $x\in \lbrack 1,2)$ and $\left( \ref{Ineq VAR}\right) $ is true for all $%
t\geq 0$.

\item If $g\left( 2^{-}\right) >0$, there exists one single $x^{\ast
}\in (1,2)$ such that $g\left( x^{\ast }\right) =0$, with $g\left( x\right)
<0$ for all $x\in \lbrack 1,x^{\ast })$ and $g\left( x\right) >0$ for all $%
x\in (x^{\ast },2)$.

\begin{itemize}
\item If $1+\frac{1}{n}\leq x^{\ast }$ (namely $n\geq \frac{1}{x^{\ast }-1}$%
), then $g\left( x\right) <0$ for all $x\in \lbrack 1,1+\frac{1}{n})$ and $%
\left( \ref{Ineq VAR}\right) $ is true for all $t\in \lbrack nT,\left(
n+1\right) T)$. This inequality is hence true for all $t\geq n^{\ast }T$
with $n^{\ast }=\left\lceil \frac{1}{x^{\ast }-1}\right\rceil $.

\item If $1+\frac{1}{n}>x^{\ast }$ (namely $n<\frac{1}{x^{\ast }-1}$), then $%
g\left( x\right) <0$ for all $x\in \lbrack 1,x^{\ast })$ and $\left( \ref%
{Ineq VAR}\right) $ is true for all $t\in \lbrack nT,x^{\ast }nT)$, with a
reversed inequality for $t\in \lbrack x^{\ast }nT,\left( n+1\right) T)$.
\end{itemize}
\end{itemize}
\end{proof}

\begin{remark}
In the case where $g\left( 2^{-}\right) >0$, note that the inequality $n<\frac{1}{%
x^{\ast }-1}$ is always valid for $n=1$ so that for small $n$, the
difference $Var\left( Y_{t}^{(1)}\right) -Var\left( Z_{t}^{(2)}\right) $
will always cross 0 (from - to +) on $[nT,\left( n+1\right) T)$ (and will
remain negative for larger $n$).
\end{remark}

\section{Maintenance strategies}
{\black\subsection{The reward function}\label{Sub Reward}
This section is devoted to the analysis of maintenance strategies considering the two types of repair. In all the section, the system is assumed to provide} a reward which decreases when the deterioration level of the system increases. Based
on classical functions used in the insurance literature (\cite{rolski1998}),
we assume that the reward function is given by 
\begin{equation}
g(x)=\left(b_{1}-k_1e^{\alpha _{1}x}\right)\mathbf{1}_{\left\{0\leq x\leq c\right\}}+
\left(b_{2}-k_2e^{\alpha _{2}x}\right) \mathbf{1}_{\left\{c < x\right\}},
\label{reward}
\end{equation}%
with $b_{1},b_{2},\alpha_1,\alpha _{2},k_1,k_2,c>0$ {\black and $x\geq 0$,  where $g(x)$ stands for the unitary reward per unit time when the degradation level of the system is $x$}. The function $g$ is supposed to be continuous and positive on $[0,c)$, which implies that 
\begin{equation}
b_{2}-k_2e^{\alpha _{2}c}=b_{1}-k_1e^{\alpha _{1}c}>0. 
\label{cond b}
\end{equation}
Also, we assume that $\alpha _{1}\leq \alpha _{2}$ and $k _{1}\leq k_{2}$ so that level $c$ appears as a critical level, from which the system becomes less performing.  \color{black}

With the previous assumptions, it is easy to check that $g$ is a concave
function and that $g(x)>0$ if and only if $x<L=\frac{\ln (b_{2}/k_2)}{\alpha _{2}%
}$. \color{black} Level $L$ hence appears as a critical threshold.

An example of reward function is plotted in Figure \ref{funciong} with parameters $\alpha_1=0.1$, $b_1=11$ {\black monetary units per time unit (m.t.u.)}, $\alpha_2=0.25$, $k_1=1$ m.t.u., $k_2=1$ m.t.u., $c=4$ {\black time units (t.u.)}, and $b_2$ is obtained through (\ref{cond b}). With this dataset $L\simeq 10.0144$.

\begin{figure}[tbp]
\begin{center}
\includegraphics[width=0.6\textwidth]{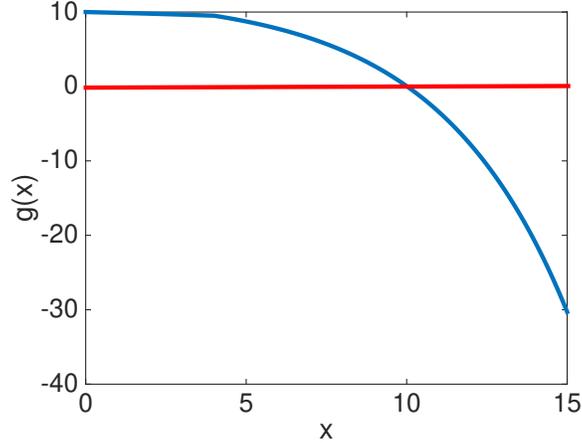}
\caption{Reward function for $\alpha_1=0.1$, $b_1=11$, $\alpha_2=0.25$, $k_1=1$, $k_2=1$, $c=4$} \label{funciong}
\end{center}
\end{figure}

{\color{black} Two different maintenance strategies are envisioned in the two following subsections. In each case, the system is put into operation at time $t=0$ and it degrades according to a non-homogeneous gamma process with parameters $A(t)$ and $\beta$. For each maintenance strategy, the two imperfect repair models are envisioned (ARA1 or ARD1) and the comparison between the two types of repair is performed through their corresponding expected reward (profit) rates per unit time on a long time run. }

{\black 
\subsection{$(n,T)$ policy}
Starting from $n \in \mathbb{N}^{*}$ and $T>0$, the $(n,T)$ maintenance scheme is developed as follows:
\begin{itemize}
\item Imperfect repairs based on either one of the two models (ARD1 or ARA1) are performed at times $T, 2T, 3T, \ldots$
\item Each imperfect repair costs $C_r$ monetary units (m.u.),
\item The profit per unit time is given by the reward function $g$ from  (\ref{reward}),
\item The system is replaced by a new one at the time of the $n$-th imperfect repair ($nT$) with a cost of $C$ m.u..
\end{itemize}
 Based on the  renewal reward theorem, see, e.g., \cite{tijms2003}, the long time reward rate per unit time for this policy is given by:
\begin{equation}  \label{nTARD}
R_{ARD}(n,T)=\frac{\int_{0}^{nT}\mathbb{E}\left(g(Y_s^{(1)})\right)ds-(n-1)C_r-C}{nT}
\end{equation}
when ARD1 repairs are considered and
\begin{equation}  \label{nTARA}
R_{ARA}(n,T)=\frac{\int_{0}^{nT}\mathbb{E}\left(g(Z_s^{(2)})\right)ds-(n-1)C_r-C}{nT}
\end{equation}
for ARA1 repairs, where $Y_s^{(1)}$ and $Z_s^{(2)}$ are is given by (\ref{Yt}) and (\ref{Zt}), respectively, with $\rho$ substituted by $\rho_i,i=1,2$ .

\begin{remark}\label{Rem Appli}
	Based on the fact that the function $-g(\cdot)$ is increasing and convex, the previously obtained theoretical results allow to derive several observations on the reward rates:
	\begin{itemize}
		\item From Propositions \ref{Prop Y rho} and \ref{Prop Z rho}, we get that $\mathbb{E}\left(-g(Y_t^{(1)})\right)$ and $\mathbb{E}\left(-g(Z_t^{(2)})\right)$  decrease with $\rho_1$ and $\rho_2$ for all $t>0$, respectively. This entails that the objective profit rates  $R_{ARD}(n,T)$ and $R_{ARA}(n,T)$ increase with the effectiveness of the repair ($\rho_1$ and $\rho_2$, respectively) for both ARD1 and ARA1 models.
		\item If the shape function $A(\cdot )$ is concave and $A((1-\rho_2)t) \geq (1-\rho_1)A(t)$ for all $t>0$ (Condition $(\ref{eqA3})$), then Theorem \ref{Thm icx icv} entails that
		$$\mathbb{E}\left(g(Z_t^{(2)})\right) \leq \mathbb{E}\left(g(Y_t^{(1)})\right), \quad \forall t>0, $$ from which we derive that the objective profit functions for the two repair models are comparable:
		\begin{equation*}
		R_{ARA}\left( n,T\right) \leq R_{ARD}\left(n,T\right) \text{ for all } n,T.
		\end{equation*}
	\end{itemize}
\end{remark}
}

We now come to some numerical illustrations.
\begin{example}
The parameters of the gamma process are $A(t)=t^{0.5}+t^{0.75}$ {\black (concave function)} and $b=1$; those for the reward reward function $g$ are $\alpha_1=0.1$, $\alpha_2=0.25$, $k_1=1$, $k_2=1$, $b_1=11$, $c=4$, which implies that $b_2=12.2265$. The $(n,T)$ policy is considered with $C_r=2$ m.u. as cost of imperfect repair and $C=25$ m.u. as replacement cost.

Figure \ref{FigR} shows the operating profit rate given
in (\ref{nTARD}) for both ARD1 and ARA1 models using $\rho_1=\rho_2=0.5$ (for which inequality $(\ref{eqA3})$ is true). The computations have been made using 8 points  for $T $ from 1 to 6 and 10 points for $n$ from 1 to 10 with 5000 simulations in
each point. The Simpson method is applied for the integrals in (\ref{nTARD}),  with 20 points from $0$ to $nT$.
\begin{figure}[t!]
	\hspace*{-0.75cm}	
	\begin{subfigure}[t]{0.65\textwidth}
		%\centering
		\includegraphics[width=0.825\textwidth]{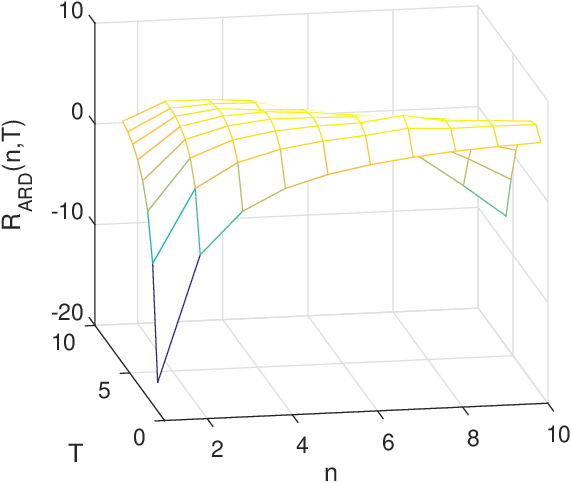}
		\caption{ARD1 model}  
	\end{subfigure}%
	~ \hspace*{-1.5cm}
	\begin{subfigure}[t]{0.65\textwidth}
		%\centering
		\includegraphics[width=0.825\textwidth]{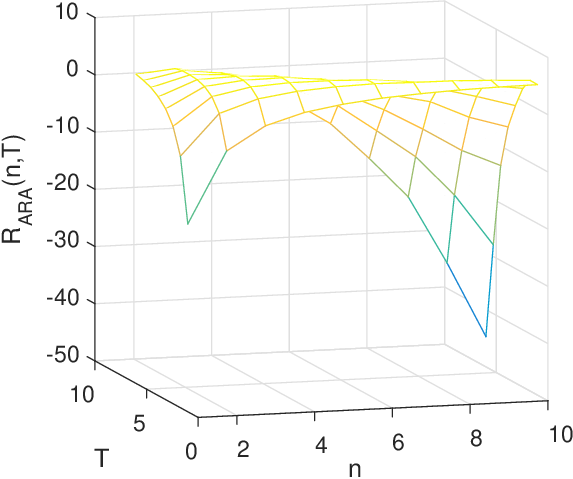}
		\caption{ARA1 model} 
	\end{subfigure}
	\caption{Operational profit rate, $(n,T)$ policy}
	\label{FigR}
\end{figure}

Under an ARD1 model, the optimal profit rate is obtained for $(n,T)=(7,2.43)$  with a profit rate of $R_{ARD}(7,2.43)=6.85$ m.u. per unit time. Under an ARA1 model, the optimal profit rate is obtained for $(n,T)=(4,3.14)$ with a profit rate of $R_{ARA}(4,3.14)=5.29$ m.u. per unit time. Figure \ref{differencesnT} shows the
difference of the profit rates under the two repair models, that is, $%
R_{ARD}(n,T)-R_{ARA}(n,T)$ for all $n$ and $T$. As expected from the previous theoretical results, $R_{ARA}(n,T)\leq R_{ARD}(n,T)$. Also, we can observe that the difference between the two rates increases as $n$ and $T$ increases.  

\begin{figure}[t!]
	\hspace*{-0.75cm}	
	\begin{subfigure}[t]{0.7\textwidth}
		%\centering
		\includegraphics[width=0.75\textwidth]{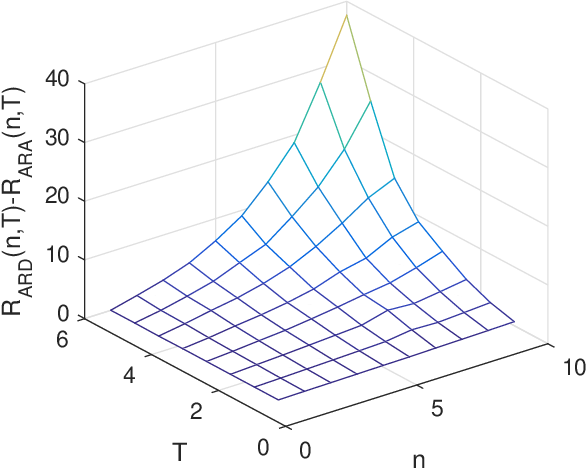}
		\caption{Case $\rho_1=\rho_2=0.5$} \label{differencesnT}	
	\end{subfigure}%
	~ \hspace*{-2cm}
	\begin{subfigure}[t]{0.7\textwidth}
		%\centering
		\includegraphics[width=0.75\textwidth]{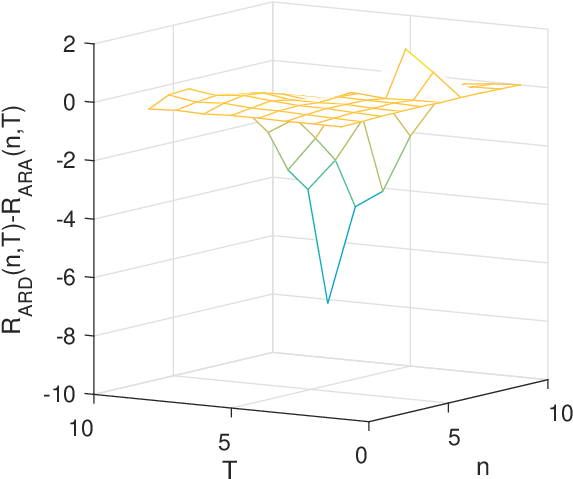}
		\caption{Case $\rho_1=0.31$ and $\rho_2=0.5$} \label{ARDAARAcont}
	\end{subfigure}
	\caption{Differences between the operational profit rates, $(n,T)$ policy}
\end{figure}
\end{example}
\begin{example}
{\black Keeping the same parameters as in the previous example except from the repair efficiency for the ARD1 model which becomes} $\rho_1=0.31$, Figure \ref{ARDAARAcont} shows the difference {\black between} the profit rates under the two repair models. Here $A(t)=t^{0.5}+t^{0.75}$ is concave but condition (\ref{eqA3}) of Theorem \ref{Thm icx icv} is not valid any more. We observe that there is no dominance of the profit rate of one model {\black over} the other.  
\end{example}

{\color{black} Although the $(n,T)$ policy allows us to compare the two models of repair, under this maintenance policy}, the system is (imperfectly) repaired even when the
system is so degraded that the reward {\black has become negative}. We now suggest a more
realistic {\black condition-based maintenance strategy, where the maintenance action depends on } the degradation level of the system. 

{\color{black}
\subsection{(M,T) policy}
 Let $T>0$ and let $M \in [0,L)$ be a preventive maintenance threshold. (We recall that $L$ is the critical threshold defined in Subsection \ref{Sub Reward} from where the reward becomes negative). The $(M,T)$ condition-based maintenance scheme is developed as follows:
\begin{itemize}
\item The system is inspected at times $T, 2T, 3T, \ldots$ and the system degradation level is checked. 
\item By an inspection:
	\begin{itemize}
		\item If the degradation level does not exceed the threshold $M$, an imperfect repair based on either one of the two models (ARD1 or ARA1) is performed with a cost of $C_r$ m.u.;
		\item If the degradation level is between levels $M$ and $L$, a preventive replacement is performed and the system is instantaneously replaced by a new one with a cost of $C_p$ m.u.; 	
		\item If the degradation level exceeds $L$, a instantaneous corrective replacement takes place with a cost of $C_c$ m.u..
	\end{itemize}
\item The profit per unit time is given by the reward function $g$ from (\ref{reward}).
\end{itemize}
}

The successive (corrective or preventive) replacements of the system appear as 
the points of a renewal process, and the long time profit rate per unit time is given by 
\begin{eqnarray} \label{CARDTM}
&&C_{ARD}(T,M) \\ \nonumber
&=&\frac{\mathbb{E}\left( \int_{0}^{R}g\left( Y_s^{(1)}\right) ds-C_{r}\left(
[R/T]-1\right) -C_{p}\mathbf{1}_{\left\{ M\leq Y_{R}^{(1)}<L\right\} }-C_{c}%
\mathbf{1}_{\left\{ L\leq Y_{R}^{(1)}\right\} }\right) }{\mathbb{E}(R)}
\end{eqnarray}%
for the ARD1 model, with a similar expression for the ARA1 model ($C_{ARA}(T,M)$), 
where $R$ stands for the time to a system replacement and $g$ denotes the reward function given by (\ref{funciong}). Due to the complexity of the {\color{black} $(M,T)$ policy}, there is no hope here to find {\color{black} analytical conditions that could ensure the dominance of one function} $C_{ARD}(T,M)$ or $%
C_{ARA}(T,M)$ over the other. Their comparison is hence made on a numerical example.

\begin{example}
	The parameters of the gamma process are $A(t)=1.3 t$ and $b=0.8$. For the reward function $g$, they are $\alpha_1=0.4$, $\alpha_2=0.5$, $b_1=800$, $k_1=1.05$, $k_2=1.07$, $c=8$, which implies $b_2=832.6609$ and $L=13.3139$. The repair efficiencies of the ARD1/ARA1 repairs are $\rho_{1}=\rho_{2}=0.9$. Their common cost is $C_{r}=200$ m.u.. The cost of a preventive replacement is $C_p=1000$ m.u. whereas it is $C_c=1300$ m.u. for a corrective one.
	Figure \ref{figureCBMarda} shows the profit rate for the maintained
	system under an ARD1 repair. The optimal maintenance strategy is
	obtained for $(M,T)=(9.21,3.05)$ with a profit rate of $%
	C_{ARD}(3.05, 9.21)=673.94$ m.u. per unit time. Figure \ref%
	{figureCBMara} shows the profit rate for the ARA1 repairs. The optimal maintenance strategy is obtained
	for  $(M,T)=(10.24,3.05)$ with a profit rate of  $%
	C_{ARA}(3.05,10.24)=684.34$ m.u. per unit time. These figures have
	been computed considering a grid of 10 points for $T$ from  1.14 to 4 and a
	grid of  13 \color{black} points for $M$ from 1 to $L$ and  10000 \color{black} simulations for each pair of points.
	
	\begin{figure}[t!]
		\hspace*{-0.75cm}	
		\begin{subfigure}[t]{0.68\textwidth}
			%\centering
			\includegraphics[width=0.8\textwidth]{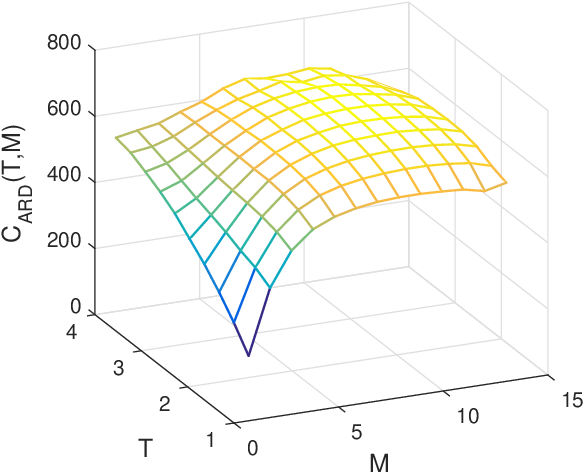}
			\caption{$\rho_1=0.9$, ARD1 model} \label{figureCBMarda}	
		\end{subfigure}%
		~ \hspace*{-1.75cm}
		\begin{subfigure}[t]{0.68\textwidth}
			%\centering
			\includegraphics[width=0.8\textwidth]{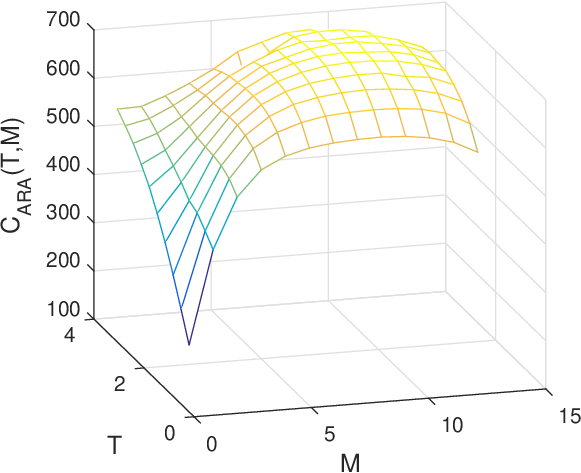}
			\caption{$\rho_2=0.9$, ARA1 model} \label{figureCBMara}	
		\end{subfigure}
		\caption{Operational profit rates, $(M,T)$ policy}
	\end{figure}
	
	Figure \ref{differencesCBM} shows the difference between the profit rates \linebreak $C_{ARD}(T,M)~-~C_{ARA}(T,M)$ for this dataset.  Although conditions of Theorem \ref{Thm icx icv} are fulfilled, we can see that the sign of the difference changes, so that there is no dominance of the profit rate of one model on the other. 
	\begin{figure}[tbp]
		\begin{center}
			\includegraphics[width=0.6\textwidth]{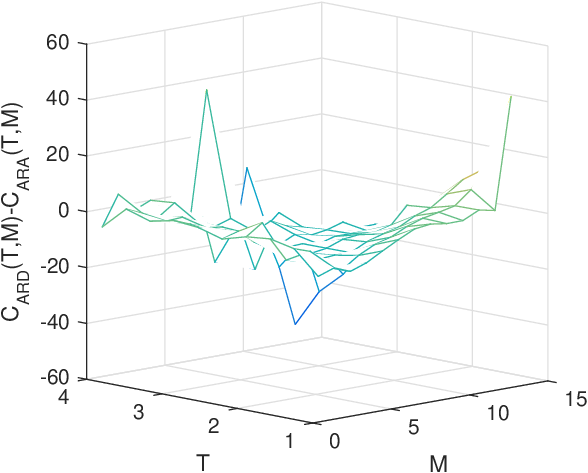}
			\caption{Difference of operational profit rates, $\rho_1=\rho_2=0.9$} \label{differencesCBM}
		\end{center}
	\end{figure}
\end{example}

\section{Conclusions and perspective}

{\black Two imperfect repair models for a degrading system are compared in this paper. The comparison is performed in terms of location and spread of the two resulting stochastic processes. Results are provided in terms of moments and likelihood ratio ordering, as well as in terms of (increasing) convex/concave ordering, which is not so common in the reliability literature. Two maintenance strategies are also  developed, which are assessed through a reward function, which takes into account the effective deterioration level of the system (the lower the deterioration level, the higher the reward), contrary to classical objective functions from the literature.  

The paper is developed under a periodic imperfect repair scheme, for sake of simplification. It is however easy to check that all the results of the paper would remain valid under a deterministic non periodic repair scheme (with the same maintenance times for both imperfect repair models), with very slight modifications. Even more, considering random maintenance times (independent on the deterioration level and identically distributed for both imperfect repair models), most results would also remain valid, such as the likelihood ratio and (increasing) convex/concave comparison results, based on the closure under mixture property of these stochastic orders \cite[Thm 1.C.15. p 48, Thm 4.A.8. p 185]{shaked2007}.

Note also that if the paper focuses on some specific stochastic orders,} other ones could also be considered such as Laplace transform or Excess Wealth orders for instance. Other questions of interest concern the comparison of remaining lifetimes, considering the system as failed (or too degraded) when its deterioration level is beyond a fixed failure (critical) threshold. From a theoretical point of view, this seems a difficult issue in a general setting. One could then look at partial results {\black in specific situations.

 Another point of interest would be to try and compare the two types of imperfect repairs dropping the gamma-process assumption. Based on the fact that, under technical conditions, normal random variables are comparable with respect to several stochastic orders (see, e.g., \cite{muller2002}), one can wonder whether it could be possible to get some similar results as in the paper for Wiener processes with drift. (Not all however, because comparison results between normal random variables in the likelihood ratio ordering sense require that the random variables share the same variance, which cannot be the case in our context. The same for the log-concave order, which requires that the two normal random variables share the same mean, see \cite{Whitt1985}). Other deteriorations processes might also be envisioned, such as inverse Gaussian or inverse Gamma processes.

Finally, other maintenance strategies could be envisioned, based on either one of the two types of imperfect repair.  

\section*{Acknowledgements} 
Both authors warmly thank the three referees and the Editor for their constructive remarks and comments, which have led to a much clearer and better structured paper.}

\bibliographystyle{elsarticle-harv}

\end{document}